\title[
Terminal toric Fano three-folds with numerical conditions]
{
Terminal toric Fano three-folds with certain numerical conditions} 
\author{Hiroshi Sato and Ryota Sumiyoshi} 
\subjclass[2010]{Primary 14M25; Secondary 14J45.}
\date{2019/10/30, version 0.12}
\keywords{toric varieties, Fano varieties, 
toric Mori theory}
\address{Department of Applied Mathematics, Faculty of Sciences, 
Fukuoka University, 8-19-1, Nanakuma, Jonan-ku, Fukuoka 814-0180, Japan}
\email{hirosato@fukuoka-u.ac.jp}
\address{Department of Applied Mathematics, Faculty of Sciences, 
Fukuoka University, 8-19-1, Nanakuma, Jonan-ku, Fukuoka 814-0180, Japan}
\email{sd170002@cis.fukuoka-u.ac.jp}
\newcommand{\mult}[0]{{\operatorname{mult}}}
\newcommand{\N}[0]{{\operatorname{N}}}
\newcommand{\G}[0]{{\operatorname{G}}}
\newcommand{\Z}[0]{{\operatorname{Z}}}
\newtheorem{thm}{Theorem}[section]
\newtheorem{lem}[thm]{Lemma}
\newtheorem{cor}[thm]{Corollary}
\newtheorem{prop}[thm]{Proposition}
\theoremstyle{definition}
\newtheorem{ex}[thm]{Example}
\newtheorem{que}[thm]{Question}
\newtheorem{defn}[thm]{Definition}
\newtheorem{rem}[thm]{Remark}
\newtheorem*{ack}{Acknowledgments}       
\begin{document}
\bibliographystyle{amsalpha+}

\begin{abstract}
We completely classify the $\mathbb{Q}$-factorial 
terminal toric Fano three-folds such that 
the sum of the squared torus invariant prime divisors 
is non-negative.
\end{abstract}

\maketitle

\tableofcontents
\section{Introduction} 

A smooth projective variety $X$ is a {\em Fano} manifold 
if its first Chern character 
${\rm ch}_1(X)={\rm c}_1(X)=-K_X$ is an ample divisor. 
Although the definition of Fano manifolds is very simple, 
there are a lot of important properties for them. 

In order to investigate the existence of rational surfaces 
on a Fano manifold, the notion of 
$2$-Fano manifolds was introduced in \cite{starr}:

\begin{defn}
A Fano manifold $X$ is a $2$-{\em Fano} manifold if the 
second Chern character ${\rm ch}_2(X)$ is nef, that is, 
${\rm ch}_2(X)\cdot S\ge 0$ for any surface $S$ on $X$. 
\end{defn}

For recent results for the classification of 
$2$-Fano manifolds, see \cite{castravet}. 
Few examples of $2$-Fano manifolds are known so far. 
This means that 
the condition that ${\rm ch}_2(X)$ is nef strongly 
determines the structure of a Fano manifold. 

For the case of toric manifolds, 
we can see this phenomenon without 
the condition that $X$ is Fano. Namely, 
it seems that projective toric manifolds whose 
second Chern characters are nef have common geometric 
properties (see \cite{nobili}, \cite{sato1} and \cite{sato2}). 

In this paper, 
we  generalize these researches for singular cases. 
Let $X$ be a $\mathbb{Q}$-factorial projective toric 
$d$-fold and $D_1,\ldots,D_n$ the torus invariant 
prime divisors on $X$. Put
\[
\gamma_2(X):=D_1^2+\cdots+D_n^2.
\]
If $X$ is smooth, it is well known that 
$2{\rm ch}_2(X)=\gamma_2(X)$. 
So, we investigate a singular toric variety $X$ 
such that $\gamma_2(X)$ is 
positive or 
nef (see Definition \ref{gammapositive}). 
In particular, we completely determine which 
$\mathbb{Q}$-factorial terminal toric Fano $3$-fold 
is $\gamma_2$-nef. 
There exist exactly 
$23$ $\mathbb{Q}$-factorial terminal 
$\gamma_2$-nef toric 
Fano $3$-folds (see Theorem \ref{mfo}). 
To prove the main theorem, 
we introduce the useful formulas for the 
calculation of intersection numbers. 

This paper is organized as follows: 
In Section \ref{prelimi}, we collect basic results for 
toric varieties. In Section \ref{forintersection}, 
we introduce the notion of $\gamma_2$-positive 
and $\gamma_2$-nef toric varieties 
which are main objects of this paper. 
Moreover, we introduce the formulas for 
the calculation of intersection numbers which 
are useful for checking 
whether a given toric variety is 
$\gamma_2$-nef or not. 
In Section \ref{delpezzo}, we determine 
which Gorenstein toric del Pezzo surface is $\gamma_2$-nef. 
This classification is useful to understand the results of 
Section \ref{fano3}. In Section \ref{fano3}, we complete 
the classification of $\mathbb{Q}$-factorial terminal 
$\gamma_2$-nef toric Fano $3$-folds. 
One can see that they have some common geometric properties. 

\begin{ack}
The authors would like to thank Professor 
Akihiro Higashitani for teaching them about the software 
\texttt{polymake} 
which is useful for computing convex lattice polytopes. 
They also thank Professors 
Yasuhiro Nakagawa and Takahiko Yoshida 
for comments. 
The first author was partially supported by JSPS KAKENHI 
Grant Number JP18K03262. 
Finally, the authors would like to thank the referee for giving them some polite and 
concrete comments. 
\end{ack}

\section{Preliminaries}\label{prelimi} 

In this section, we introduce some basic results and notation of 
toric varieties and intersection numbers on them. 
For the details, please see \cite{cls}, 
\cite{fulton} and  \cite{oda}.

Let $X=X_\Sigma$ be the toric $d$-fold associated to a fan 
$\Sigma$ in $N=\mathbb{Z}^d$ 
over an algebraically closed field $k$ of arbitrary characteristic. 
Put $N_{\mathbb{R}}:=N\otimes\mathbb{R}$. 
It is well known that there exists a one-to-one correspondence between 
the $r$-dimensional cones in $\Sigma$ and the torus invariant 
subvarieties of dimension $d-r$ in $X$. Let $\G(\Sigma)$ 
be the set of primitive generators for $1$-dimensional cones in $\Sigma$. 
Thus, for $v\in\G(\Sigma)$, 
we have the torus invariant prime divisor corresponding to 
$\mathbb{R}_{\ge 0}v\in\Sigma$.

For an $r$-dimensional simplicial cone $\sigma\in\Sigma$, 
let $N_\sigma\subset N$ be the sublattice generated by $\sigma\cap N$ 
and let $\sigma\cap\G(\Sigma)=\{v_1,\ldots,v_r\}$. 
Put 
\[
\mult(\sigma):=[N_\sigma:\mathbb{Z}v_1+\cdots+\mathbb{Z}v_r]
\]
which is the index of the subgroup $\mathbb{Z}v_1+\cdots+\mathbb{Z}v_r$ 
in $N_\sigma$. 
The following property for intersection numbers 
on a toric variety is fundamental. 

\begin{prop}\label{kotensu}
Let $X$ be a $\mathbb{Q}$-factorial toric $d$-fold, and let 
$\sigma,\tau\in\Sigma$, $V_{\sigma}$ and $V_{\tau}$ the 
torus invariant subvarieties associated to $\sigma$ and $\tau$. 
If $\sigma$ and $\tau$ span $\lambda\in \Sigma$ with 
$\dim \lambda=\dim \sigma +\dim \tau$, 
then 
\[
V_{\sigma}\cdot V_{\tau}=\frac{\mult (\sigma)\cdot \mult(\tau)}
{\mult (\lambda)} V_{\lambda},
\]
where $V_{\lambda}$ is the torus invariant subvariety 
associated to $\lambda$. 
On the other hand, 
if $\sigma$ and $\tau$ are contained in no cone of 
$\Sigma$, then $V_{\sigma}\cdot V_{\tau}=0$. 
\end{prop}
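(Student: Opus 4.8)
The plan is to derive the formula from a single local identity and then dispose of the two cases separately. The local identity is: for a simplicial cone $\eta\in\Sigma$ of dimension $e$ with primitive ray generators $u_1,\dots,u_e$ and associated torus invariant prime divisors $D_1,\dots,D_e$,
\[
D_1\cdots D_e=\frac{1}{\mult(\eta)}\,V_\eta ,
\]
an identity of cycle classes with $\mathbb{Q}$-coefficients (it makes sense because every Weil divisor on the $\mathbb{Q}$-factorial toric variety $X$ is $\mathbb{Q}$-Cartier, so the $D_i$ may be capped against cycles). Granting this, suppose $\sigma$ and $\tau$ span $\lambda\in\Sigma$ with $\dim\lambda=\dim\sigma+\dim\tau=:r+s$. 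Since $\sigma,\tau\subseteq\lambda$ are all cones of $\Sigma$, the fan axioms show that $\sigma$ and $\tau$ are faces of $\lambda$; as $X$ is $\mathbb{Q}$-factorial, $\lambda$ is simplicial, and being generated by $\sigma\cup\tau$ it has exactly $r+s$ rays, namely the disjoint union of the rays of $\sigma$ and of $\tau$. Writing these as $u_1,\dots,u_r$ for $\sigma$ and $u_{r+1},\dots,u_{r+s}$ for $\tau$, the local identity gives $V_\sigma=\mult(\sigma)\,D_1\cdots D_r$, $V_\tau=\mult(\tau)\,D_{r+1}\cdots D_{r+s}$ and $D_1\cdots D_{r+s}=\frac{1}{\mult(\lambda)}V_\lambda$, whence
\[
V_\sigma\cdot V_\tau=\mult(\sigma)\,\mult(\tau)\,D_1\cdots D_{r+s}=\frac{\mult(\sigma)\,\mult(\tau)}{\mult(\lambda)}\,V_\lambda .
\]

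For the other case, recall that by the orbit--cone correspondence $V_\sigma=\bigsqcup_{\sigma\preceq\gamma}O_\gamma$ and $V_\tau=\bigsqcup_{\tau\preceq\gamma}O_\gamma$, where $O_\gamma$ is the torus orbit attached to $\gamma$. Thus $V_\sigma\cap V_\tau$ is the union of those orbits $O_\gamma$ with $\gamma\in\Sigma$ having both $\sigma$ and $\tau$ as faces. If no cone of $\Sigma$ contains both $\sigma$ and $\tau$, this intersection is empty, so $V_\sigma$ and $V_\tau$ are disjoint closed subvarieties and $V_\sigma\cdot V_\tau=0$, as this intersection product is supported on the intersection of the cycles.

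So everything reduces to the local identity, which is the step I expect to require the most care. One natural route is induction on $e$: the case $e=1$ is trivial ($D_\rho=V_\rho$, $\mult=1$), and the inductive step amounts to showing that if $\eta$ is a simplicial cone and $\eta_0$ is the facet obtained by deleting a ray $\rho$, then $D_\rho\cdot V_{\eta_0}=\frac{\mult(\eta_0)}{\mult(\eta)}\,V_\eta$. This is local on the affine chart $U_\eta$, where one may use the presentation $U_\eta\cong\widetilde U_\eta/G$ with $\widetilde U_\eta$ a smooth affine toric variety on the same rays (taken in the sublattice they generate) and $G$ a finite abelian group of order $\mult(\eta)$. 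Upstairs the identity is just the transversal intersection of coordinate subspaces, and one transports it downstairs along the quotient map $\pi\colon\widetilde U_\eta\to U_\eta$. The delicate part is the bookkeeping of multiplicities: $\pi$ is finite of degree $\mult(\eta)$ but ramifies along the toric boundary, so one must track carefully how the $D_\rho$ and the subvarieties $V_{\eta_0},V_\eta$ behave under $\pi^*$ and $\pi_*$, and it is precisely there that the ratio $\mult(\eta_0)/\mult(\eta)$ emerges. Alternatively, one can quote the simplicial version of this computation directly from \cite[\S5.1]{fulton} (see also \cite{cls}).
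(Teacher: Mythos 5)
The paper offers no proof of this proposition: it is stated as a ``fundamental'' fact and implicitly quoted from the standard references (\cite{fulton}, \S 5.1 in particular) listed at the start of Section \ref{prelimi}. Your argument is correct and is essentially the standard proof behind that citation --- the reduction to the local identity $D_1\cdots D_e=\mult(\eta)^{-1}V_\eta$, the disjointness argument via the orbit--cone correspondence, and the finite-quotient computation of the key multiplicity are all sound and match what Fulton carries out.
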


Let $X$ be a projective toric $d$-fold. 
For $1\le i\le d$, 
we put 
\[
\Z_i(X):=\{\mbox{the }i\mbox{-cycles on } X\},
\mbox{ while }
\Z^i(X):=\{\mbox{the }i\mbox{-cocycles on } X\}.
\]
We introduce the numerical equivalence $\equiv$ on $\Z_i(X)$ 
and $\Z^i(X)$ as follows: For $C\in\Z_i(X)$, we define 
$C\equiv 0$ if $D\cdot C=0$ for any $D\in \Z^i(X)$, while 
for $D\in\Z^i(X)$, we define 
$D\equiv 0$ if $D\cdot C=0$ for any $C\in \Z_i(X)$. 
We put 
\[
\N_i(X):=\left(\Z_i(X)\otimes{\mathbb R}\right)/\equiv, 
\mbox{ while }
\N^i(X):=\left(\Z^i(X)\otimes \mathbb R\right)/\equiv.
\]

\section{Formulas for intersection numbers}\label{forintersection}

In this section, we introduce the notion of $\gamma_r$-positive 
toric varieties, which are main objects of this paper. 
Also, we prepare the formulas for intersection numbers which 
are useful for  our calculations in Section \ref{fano3}. 

\begin{defn}\label{gammapositive}
Let $X$ be a $\mathbb{Q}$-factorial projective toric 
$d$-fold. For any integer $1\le r\le d$, put 
\[
\gamma_r=\gamma_r(X):=D_1^r+\cdots+D_n^r\in\N^r(X),
\]
where $D_1,\ldots,D_n$ are the torus invariant prime divisors. 

If $\gamma_r\cdot Y>0$ (resp. $\ge 0$) for any $r$-dimensional 
torus invariant subvariety $Y\subset X$, 
then we say that $X$ is $\gamma_r$-{\em positive}  
(resp. $\gamma_r$-{\em nef}).   
\end{defn}
\begin{rem}
For the case where $X$ is smooth, it is well known that  
\[
\frac{1}{r!}\gamma_r(X)={\rm ch}_r(X)
\]
which is the $r$-th Chern character.
\end{rem}
The main purpose of this paper is to determine 
which $\mathbb{Q}$-factorial terminal toric Fano $3$-fold is 
$\gamma_2$-positive or $\gamma_2$-nef  
(see Section \ref{fano3}). 

For the classification, we introduce the following notion 
about intersection numbers on $X$. This is convenient 
for the calculation of intersection numbers.

\begin{defn}\label{polynomial}
Let $X=X_\Sigma$ be a $\mathbb{Q}$-factorial projective toric 
$d$-fold, and $S\subset X$ a torus invariant subsurface 
on $X$. Let $\G(\Sigma)=\{x_1,\ldots,x_n\}$, and we consider 
the polynomial ring $\mathcal{R}(X):=
\mathbb{Q}[X_1,\ldots,X_n]$, where 
$X_1,\ldots,X_n$ are the 
independent variables of polynomials corresponding 
to $x_1,\ldots,x_n$, respectively. Let $D_1,\ldots,D_n$ 
be the torus invariant prime divisors corresponding to 
$x_1,\ldots,x_n$, respectively. Then, we define 
the quadric homogeneous 
polynomial $I_{S/X}\in \mathbb{Q}[X_1,\ldots,X_n]$ 
as follows:
\[
I_{S/X}=I_{S/X}(X_1,\ldots,X_n):=
\sum_{1\le i,j\le n}(D_i\cdot D_j\cdot S)X_iX_j.
\]
\end{defn}

For the cases where $\rho(S)=1$ or $2$, 
we can determine $I_{S/X}$ explicitly as follows. 

\begin{prop}\label{rho1lemma} 
Let $X=X_\Sigma$ be a $\mathbb{Q}$-factorial projective toric 
$d$-fold, and $S\subset X$ a torus invariant subsurface 
on $X$. 
If $\rho(S)=1$, then $\gamma_2\cdot S>0$. 
\end{prop}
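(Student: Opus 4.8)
The plan is to reduce everything to the surface $S$ itself, which (being a $\mathbb{Q}$-factorial projective toric surface with $\rho(S)=1$) is very rigid. Concretely, $S=S_\tau$ corresponds to a $(d-2)$-dimensional cone $\tau\in\Sigma$, and the torus invariant curves on $S$ correspond to the $(d-1)$-dimensional cones of $\Sigma$ containing $\tau$. Since $\rho(S)=1$, the surface $S$ is (up to finite quotient) a fake weighted projective plane: its fan has exactly three rays, say with primitive generators $w_1,w_2,w_3$ inducing torus invariant curves $C_1,C_2,C_3$ on $S$, and $\operatorname{N}^1(S)\otimes\mathbb{Q}\cong\mathbb{Q}$. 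The first step is to record that, for each torus invariant prime divisor $D_i$ on $X$, the restriction $D_i|_S$ is numerically proportional to an effective curve on $S$ (either one of the $C_j$, when the ray of $D_i$ is one of the three rays through $\tau$, or the zero class otherwise, by the last sentence of Proposition~\ref{kotensu}). In particular each $D_i|_S$ is a nonnegative multiple of an ample generator $H$ of $\operatorname{N}^1(S)\otimes\mathbb{Q}$.

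Given that, $\gamma_2(X)\cdot S=\sum_{i=1}^n D_i^2\cdot S=\sum_{i=1}^n (D_i|_S)^2$, and each summand $(D_i|_S)^2=(a_iH)^2=a_i^2\,(H^2)$ with $a_i\ge 0$ and $H^2>0$ since $H$ is ample on the projective surface $S$. So the whole sum is $\bigl(\sum_i a_i^2\bigr)H^2\ge 0$, and to get strict positivity I only need that at least one $a_i$ is nonzero. That is immediate: among the $D_i$ are the three divisors whose rays pass through $\tau$, and their restrictions $C_1,C_2,C_3$ to $S$ are the torus invariant curves on $S$, which are nonzero effective classes (a complete toric surface has nonempty one-skeleton). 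Hence $\gamma_2(X)\cdot S>0$.

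The one technical point that needs care — and the step I expect to be the main obstacle — is justifying the identification $\operatorname{N}^1(S)\otimes\mathbb{Q}\cong\mathbb{Q}$ with ample generator $H$ together with the claim that every $D_i|_S$ lies in the nonnegative ray: one must check that restriction of divisors from $X$ to $S$ is compatible with the toric structure on $S$, i.e. that $D_i|_S$ is again a torus invariant (hence effective, hence nef since $\rho(S)=1$) Weil divisor class on $S$ — up to the subtlety that $S$ may be non-$\mathbb{Q}$-factorial even though $X$ is, so one works with $\mathbb{Q}$-Cartier multiples or uses Proposition~\ref{kotensu} directly to compute $D_i\cdot D_j\cdot S$ as a nonnegative rational multiple of the point class $V_\lambda$. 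In fact the cleanest route is to bypass $\operatorname{N}^1(S)$ entirely: use Proposition~\ref{kotensu} to show $D_i\cdot D_j\cdot S\ge 0$ for all $i,j$ — it is a nonnegative multiple of $[\mathrm{pt}]$ whenever the rays of $D_i$, $D_j$ and $\tau$ together span a cone of $\Sigma$, and $0$ otherwise — and then observe $\gamma_2(X)\cdot S=\sum_i D_i^2\cdot S\ge D_1^2\cdot S+D_2^2\cdot S+D_3^2\cdot S$ where $D_1,D_2,D_3$ are the three divisors through $\tau$; since on a simplicial surface with three rays one has $D_j^2\cdot S=\mathrm{mult}(\tau)^2\,\mathrm{mult}(\langle w_j\rangle)^2/\bigl(\mathrm{mult}(\tau+\langle w_j\rangle)\cdot(\text{appropriate index})\bigr)\cdot(\cdots)>0$ for at least one $j$ by a short self-intersection computation, positivity follows. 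Either way the essential content is: on a $\rho=1$ toric surface every effective curve is a positive multiple of an ample class, so a sum of squares of restricted divisors that is not all zero must be strictly positive.
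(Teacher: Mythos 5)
Your overall strategy is essentially the paper's: since $\rho(S)=1$, every restricted class $D_i|_S$ is a rational multiple $a_iH$ of a single generator, so $\gamma_2\cdot S=\bigl(\sum_i a_i^2\bigr)H^2$ is a sum of squares times $H^2>0$, and strict positivity follows because the three divisors transverse to $S$ restrict to nonzero effective curves. The paper implements exactly this by writing the unique (up to positive scalar) linear relation $a_1x_1+\cdots+a_{d+1}x_{d+1}=0$ among the $d+1$ rays in the star of $\tau$ and deriving $\alpha\, D_i\cdot D_j\cdot S=a_ia_j$, whence $\gamma_2\cdot S=(a_1^2+\cdots+a_{d+1}^2)/\alpha>0$.

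There is, however, a concrete false step in your write-up: the claim that each $D_i|_S$ is a \emph{nonnegative} multiple of $H$, equivalently that $D_i\cdot D_j\cdot S\ge 0$ for all $i,j$. Your case analysis (``one of the $C_j$ when the ray of $D_i$ is one of the three rays through $\tau$, or the zero class otherwise'') omits the $d-2$ divisors whose rays generate $\tau$ itself, i.e.\ the divisors \emph{containing} $S$. For those, neither clause of Proposition \ref{kotensu} applies (the ray and $\tau$ span a cone, but of dimension $d-2$ rather than $d-1$), and the restriction can be a strictly negative multiple of $H$: for the exceptional $\mathbb{P}^2=E\subset X$ in the blow-up of $\mathbb{P}^3$ at a torus-fixed point one has $E|_E=-H$ and $E\cdot D_j\cdot E=-1<0$ for each transverse $D_j$, even though $\rho(E)=1$. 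In the paper's notation this is the fact that $a_1,\ldots,a_{d-2}$ may be negative or zero; only $a_{d-1},a_d,a_{d+1}$ are guaranteed positive. Your final conclusion survives because only the diagonal terms $D_i^2\cdot S=a_i^2H^2\ge 0$ enter $\gamma_2\cdot S$, so the sign of $a_i$ is irrelevant there; but as stated the intermediate claim is wrong (and the asserted nonnegativity of all $D_i\cdot D_j\cdot S$ would also contradict the tables in Proposition \ref{rho2lemma}). Replace ``nonnegative multiple'' by ``some rational multiple $a_iH$'' and the argument closes, coinciding in substance with the paper's proof.
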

\begin{proof}
Let $\tau\in\Sigma$ be a $(d-2)$-dimensional cone 
associated to $S$ and 
$\tau\cap\G(\Sigma)=\{x_1,\ldots,x_{d-2}\}$. 
There exist exactly $3$ maximal cones 
\[
\mathbb{R}_{\ge 0}x_{d-1}+\mathbb{R}_{\ge 0}x_d+\tau,\ 
\mathbb{R}_{\ge 0}x_d+\mathbb{R}_{\ge 0}x_{d+1}+\tau,\ 
\mathbb{R}_{\ge 0}x_{d+1}+\mathbb{R}_{\ge 0}x_{d-1}\tau
\] 
in $\Sigma$, where $\{x_{d-1},x_d,x_{d+1}\}\subset\G(\Sigma)$. 
There exists a linear relation 
\[
a_1x_1+\cdots+a_{d-2}x_{d-2}+
a_{d-1}x_{d-1}+a_dx_d+a_{d+1}x_{d+1}=0,
\]
where $a_1,\ldots,a_{d+1}\in\mathbb{Q}$ and 
$a_{d-1},a_d,a_{d+1}>0$. We remark that this relation is 
unique up to multiplication by a {\em positive} rational number. 
Since $\{x_1,\ldots,x_d\}$ is an 
$\mathbb{R}$-basis for $N_{\mathbb{R}}$, 
we have $d$ relations 
\[
D_i-\frac{a_i}{a_{d+1}}D_{d+1}+F_i=0\mbox{ for }
1\le i\le d,
\]
in $\N^1(X)$, where $D_1,\ldots,D_{d+1}$ are 
torus invariant prime divisors corresponding to 
$x_1,\ldots,x_{d+1}$, respectively, while $F_1,\ldots,F_d$ 
are torus invariant divisors which do not intersect $S$. 
So, we can ignore $F_i$'s in the following calculation. 
For any $1\le i,j\le d$, 
\[
D_i\cdot D_j\cdot S=
\left(\frac{a_i}{a_{d+1}}\times\frac{a_{d+1}}{a_{d-1}}D_{d-1}\right)\cdot
\left(\frac{a_j}{a_{d+1}}\times\frac{a_{d+1}}{a_d}D_d\right)\cdot S=
\frac{a_ia_j}{a_{d-1}a_{d}}D_{d-1}D_d\cdot S.
\]
This means that 
there exists a rational positive number $\alpha$ such that 
for any $1\le i,j\le d+1$, we have 
$\alpha D_i\cdot D_j\cdot S=a_ia_j$. 
In particular, we have 
\[
\gamma_2\cdot S=\frac{a_1^2+\cdots+a_{d+1}^2}{\alpha}>0.
\] 
\end{proof}

\begin{rem}
The calculation in the proof of Proposition \ref{rho1lemma} 
tells us that 
\[
\alpha I_{S/X}=(a_1X_1+\cdots+a_{d+1}X_{d+1})^2,
\]
where $X_1,\ldots,X_{d+1}$ are the 
independent variables of polynomials corresponding 
to $x_1,\ldots,x_{d+1}$, respectively. 
\end{rem}

Although the case where $\rho(S)=2$ is more complicated, 
we can determine $I_{S/X}$ as follows:

Let $X=X_\Sigma$ be a $\mathbb{Q}$-factorial projective toric 
$d$-fold, and $S\subset X$ a torus invariant subsurface 
of $\rho(S)=2$. 
Let $\tau\in\Sigma$ be a $(d-2)$-dimensional cone 
associated to $S$ and 
$\tau\cap\G(\Sigma)=\{x_1,\ldots,x_{d-2}\}$. Then, 
there exist exactly $4$ maximal cones 
\[
\mathbb{R}_{\ge 0}y_1+\mathbb{R}_{\ge 0}y_3+\tau,\ 
\mathbb{R}_{\ge 0}y_2+\mathbb{R}_{\ge 0}y_3+\tau,\ 
\mathbb{R}_{\ge 0}y_1+\mathbb{R}_{\ge 0}y_4+\tau,\ 
\mathbb{R}_{\ge 0}y_2+\mathbb{R}_{\ge 0}y_4+\tau 
\] 
in $\Sigma$, where $\{y_1,y_2,y_3,y_4\}\subset\G(\Sigma)$. 
For $(d-1)$-dimensional cones 
$\mathbb{R}_{\ge 0}y_3+\tau$ and  
$\mathbb{R}_{\ge 0}y_1+\tau$, we have linear relations 
\[
b_1y_1+b_2y_2+c_3y_3+a_1x_1+\cdots+a_{d-2}x_{d-2}=0
\mbox{ and }
\]
\[
b_3y_3+b_4y_4+c_1y_1+e_1x_1+\cdots+e_{d-2}x_{d-2}=0,
\]
respectively, where $a_1,\ldots,a_{d-2},b_1,b_2,b_3,b_4,
c_1,c_3,e_1,\ldots,e_{d-2}\in\mathbb{Q}$ and 
$b_1,b_2,b_3,b_4>0$. Then, the following holds.

\begin{prop}\label{rho2lemma} 
Under the above setting, we have
\[
\alpha I_{S/X}=
-b_3c_1\left(b_1Y_1+b_2Y_2+c_3Y_3+\sum_{i=1}^{d-2}a_iX_i\right)^2+
\]
\[
2b_1b_3
\left(b_1Y_1+b_2Y_2+c_3Y_3+\sum_{i=1}^{d-2}a_iX_i\right)
\left(b_3Y_3+b_4Y_4+c_1Y_1+\sum_{i=1}^{d-2}e_iX_i\right)
\]
\[
-b_1c_3\left(b_3Y_3+b_4Y_4+c_1Y_1+\sum_{i=1}^{d-2}e_iX_i\right)^2,
\]
for some positive rational number $\alpha$, where  
$X_1,\ldots,X_{d-2},Y_1,Y_2,Y_3,Y_4$ are the 
independent variables of polynomials corresponding 
to $x_1,\ldots,x_{d-2},y_1,y_2,y_3,y_4$, respectively. 
In particular,  
\[
\alpha \gamma_2\cdot S=
-b_3c_1\left(b_1^2+b_2^2+c_3^2+\sum_{i=1}^{d-2}a_i^2\right)
+2b_1b_3\left(b_1c_1+b_3c_3+\sum_{i=1}^{d-2}a_ie_i\right)
\]
\[
-b_1c_3\left(b_3^2+b_4^2+c_1^2+\sum_{i=1}^{d-2}e_i^2\right).
\]
\end{prop}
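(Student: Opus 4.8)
The plan is to mimic the strategy of Proposition \ref{rho1lemma}, but now working with the two linear relations that describe the structure of $\Sigma$ near the cone $\tau$. First I would set up notation so that all intersection numbers $D_i\cdot D_j\cdot S$ are expressed in terms of a single positive multiplier. Concretely, since $\{y_1,y_3,x_1,\dots,x_{d-2}\}$ is an $\mathbb{R}$-basis for $N_{\mathbb{R}}$, the first linear relation $b_1y_1+b_2y_2+c_3y_3+\sum a_ix_i=0$ lets me express each of $D_{y_2}$ and the $D_{x_i}$ modulo divisors not meeting $S$ as rational combinations of $D_{y_1}$ and $D_{y_3}$; similarly the second relation expresses things in terms of the other basis. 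As in Proposition \ref{rho1lemma}, Proposition \ref{kotensu} gives that on $S$ only the four ``wall'' intersections $D_{y_1}\cdot D_{y_3}$, $D_{y_2}\cdot D_{y_3}$, $D_{y_1}\cdot D_{y_4}$, $D_{y_2}\cdot D_{y_4}$ (and the self-intersections obtained from these by the relations) are nonzero, and each equals a $\mult$-ratio times the class of the corresponding point.

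Next I would record the three independent triple products $D_{y_1}\cdot D_{y_3}\cdot S$, $D_{y_1}\cdot D_{y_4}\cdot S=D_{y_1}\cdot(\,\cdot\,)$ computed via the second relation, and $D_{y_3}^2\cdot S$, $D_{y_1}^2\cdot S$ etc., and solve the resulting small linear system. The two relations are dual in the sense that $y_3$ appears with positive coefficient $c_3$ in the first basis-direction expansion and $y_1$ appears with positive coefficient $c_1$ in the second; using $D_{y_1}\cdot D_{y_3}\cdot S$ as a normalizing quantity, a short computation shows that $D_{y_1}^2\cdot S$, $D_{y_3}^2\cdot S$, and the mixed $D_{y_1}\cdot D_{y_3}\cdot S$ are proportional (with a common positive factor $1/\alpha$) to $-b_3c_1$, $-b_1c_3$, and $b_1b_3$ respectively — these are exactly the coefficients appearing in the three displayed terms. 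I would then substitute the two relations back in to re-express $D_i\cdot D_j\cdot S$ for every pair $(i,j)$, and observe that the bilinear form $\sum (D_i\cdot D_j\cdot S)X_iX_j$ collapses to the stated combination of $L_1^2$, $L_1L_2$, $L_2^2$, where $L_1=b_1Y_1+b_2Y_2+c_3Y_3+\sum a_iX_i$ and $L_2=b_3Y_3+b_4Y_4+c_1Y_1+\sum e_iX_i$ are precisely the left-hand sides of the two given relations viewed as linear forms in the variables. The formula for $\alpha\,\gamma_2\cdot S$ is then obtained by setting $X_i=Y_j=1$ for all indices, which turns $L_1$ into $b_1+b_2+c_3+\sum a_i$ — no, rather $L_1^2$ into $b_1^2+b_2^2+c_3^2+\sum a_i^2$ once one notes that evaluating $\sum(D_i\cdot D_j\cdot S)$ over \emph{all} $1\le i,j\le n$ picks up, via the relations, exactly the sums of squares and cross terms displayed.

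The main obstacle I anticipate is bookkeeping: unlike the $\rho(S)=1$ case there are two independent normalizing constants to juggle (the two $\mult$-ratios at the two ``pairs'' of walls), and one must check carefully that a single common positive $\alpha$ works for \emph{all} entries of the form $\sum(D_i\cdot D_j\cdot S)X_iX_j$ simultaneously, including the entries involving the $x_i$ in $\tau$ (which enter through both relations and hence both linear forms $L_1,L_2$). The key algebraic identity to verify is that the $2\times 2$ Gram matrix of $(D_{y_1}\cdot S, D_{y_3}\cdot S)$-type classes, after clearing the relations, has the form $\frac{1}{\alpha}\begin{pmatrix}-b_3c_1 & b_1b_3\\ b_1b_3 & -b_1c_3\end{pmatrix}$; once this is in hand, expanding $\alpha\,I_{S/X}=\alpha\sum(D_i\cdot D_j\cdot S)X_iX_j$ in the basis given by $L_1,L_2$ is a mechanical (if slightly tedious) symmetric-bilinear-form computation, and the positivity of $\alpha$ follows from the positivity of the $\mult$ indices exactly as in the proof of Proposition \ref{rho1lemma}. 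The final ``in particular'' statement is then immediate by the evaluation $X_i=Y_j=1$.
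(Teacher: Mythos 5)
Your overall architecture is the same as the paper's: use the two wall relations to reduce the rank of the form, exploit $E_1E_2\cdot S=E_3E_4\cdot S=0$, normalize all products $D_iD_j\cdot S$ by a single positive intersection number, and match coefficients against the quadratic form in $L_1,L_2$. But the concrete identity you place at the heart of the computation is false. Writing $E_1,E_3$ for the divisors of $y_1,y_3$, you claim that $\alpha E_1^2\cdot S$, $\alpha E_1E_3\cdot S$, $\alpha E_3^2\cdot S$ equal $-b_3c_1$, $b_1b_3$, $-b_1c_3$. The correct values (obtained in the paper by solving the linear system coming from the relations, with $\beta:=E_2E_4\cdot S>0$ and $\alpha=b_1b_2b_3b_4/\beta$) are $b_1c_1(b_1b_3-c_1c_3)$, $b_1b_3(b_1b_3-c_1c_3)$ and $b_3c_3(b_1b_3-c_1c_3)$. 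A two-dimensional sanity check: for $S=X=\mathbb{F}_1$ with $y_1=(1,0)$, $y_2=(-1,1)$, $y_3=(0,1)$, $y_4=(0,-1)$, one has $b_1=b_2=b_3=b_4=1$, $c_3=-1$, $c_1=0$, so your matrix predicts $E_3^2=-b_1c_3=+1$ (up to a positive factor), whereas $E_3^2=-1$ (it is the $(-1)$-curve). The matrix $\begin{pmatrix}-b_3c_1&b_1b_3\\ b_1b_3&-b_1c_3\end{pmatrix}$ is not the Gram matrix of $(E_1|_S,E_3|_S)$; it is the matrix of the quadratic form $D\mapsto D^2\cdot S$ in the \emph{dual} coordinates $(D\cdot C_3,\,D\cdot C_1)$, where $C_3,C_1$ are the curves of the walls $\mathbb{R}_{\ge0}y_3+\tau$ and $\mathbb{R}_{\ge0}y_1+\tau$ (equivalently, $L_1,L_2$ are proportional to the functionals $D\mapsto D\cdot C_3$ and $D\mapsto D\cdot C_1$). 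That reformulation is essentially the statement to be proved, so asserting it as ``the key identity to verify'' leaves the actual work undone, and carrying out your plan with the values you state would produce a different (wrong) polynomial. The missing step is exactly the paper's table: from the relations $\bigl(E_1-\tfrac{b_1}{b_2}E_2-\tfrac{c_1}{b_4}E_4\bigr)\cdot S=0$ and $\bigl(E_3-\tfrac{c_3}{b_2}E_2-\tfrac{b_3}{b_4}E_4\bigr)\cdot S=0$ one solves for every pairwise product in terms of $\beta$ and only then compares coefficients.

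A second, smaller error: the ``in particular'' formula is not obtained by evaluating at $X_i=Y_j=1$ --- that substitution computes $\bigl(\sum_iD_i\bigr)^2\cdot S$, not $\gamma_2\cdot S=\sum_iD_i^2\cdot S$. You must sum only the diagonal coefficients of $I_{S/X}$, i.e.\ the coefficients of the squared variables in $-b_3c_1L_1^2+2b_1b_3L_1L_2-b_1c_3L_2^2$; for the variable $X_i$ this contributes $-b_3c_1a_i^2+2b_1b_3a_ie_i-b_1c_3e_i^2$, and summing over all variables gives the displayed expression. You half-notice this mid-sentence but then revert to the incorrect evaluation claim at the end.
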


\begin{proof}
$\{x_1,\ldots,x_{d-2},y_1,y_3\}$ is an $\mathbb{R}$-basis 
for $N_\mathbb{R}$, and by using this basis, 
$y_2$ and $y_4$ are expressed as 
\[
y_2=-\frac{1}{b_2}\left(
b_1y_1+c_3y_3+\sum_{i=1}^{d-2}a_ix_i\right)\mbox{ and }
y_4=-\frac{1}{b_4}\left(
b_3y_3+c_1y_1+\sum_{i=1}^{d-2}e_ix_i\right),
\]
respectively. Thus, we obtain $d$ relations
\[
\left(D_i-\frac{a_i}{b_2}E_2-\frac{e_i}{b_4}E_4\right)\cdot S=0
\mbox{ for }1\le i\le d-2, 
\]
\[
(1)\ \left(E_1-\frac{b_1}{b_2}E_2-\frac{c_1}{b_4}E_4\right)
\cdot S=0\mbox{ and } 
(2)\ \left(E_3-\frac{c_3}{b_2}E_2-\frac{b_3}{b_4}E_4\right)
\cdot S=0
\]
in $\N_1(X)$, where $D_1,\ldots,D_{d-2},E_1,E_2,E_3,E_4$ 
are the torus invariant prime divisors corresponding to 
$x_1,\ldots,x_{d-2},y_1,y_2,y_3,y_4$, respectively. 
On the other hand, $E_1E_2\cdot S=E_3E_4\cdot S=0$ and 
$E_1E_3\cdot S,\ E_2E_4\cdot S>0$. So, 
we can express the intersection numbers by $E_2E_4\cdot S$. 

By multiplying the equality $(1)$ by $E_1,E_2,E_3,E_4$, 
we obtain 
\[
E_1^2\cdot S=\frac{c_1}{b_4}E_1E_4\cdot S,\ 
E_2^2\cdot S=-\frac{b_2c_1}{b_1b_4}E_2E_4\cdot S,
\]
\[
E_1E_3\cdot S=\frac{b_1}{b_2}E_2E_3\cdot S,\ 
E_1E_4\cdot S=\frac{b_1}{b_2}E_2E_4\cdot S+
\frac{c_1}{b_4}E_4^2\cdot S, 
\]
while by multiplying the equality $(2)$ by $E_1,E_2,E_3,E_4$, 
we obtain 
\[
E_1E_3\cdot S=\frac{b_3}{b_4}E_1E_4\cdot S,\ 
E_2E_3\cdot S=\frac{c_3}{b_2}E_2^2\cdot S+
\frac{b_3}{b_4}E_2E_4\cdot S,\ 
\]
\[
E_3^2\cdot S=\frac{c_3}{b_2}E_2E_3\cdot S,\ 
E_4^2\cdot S=-\frac{b_4c_3}{b_2b_3}E_2E_4\cdot S.
\]
Thus, we have the equalities 
\[
E_1E_4\cdot S=\left(\frac{b_1}{b_2}-\frac{c_1c_3}{b_2b_3}\right)
E_2E_4\cdot S,\ 
E_1^2\cdot S=\frac{c_1}{b_4}
\left(\frac{b_1}{b_2}-\frac{c_1c_3}{b_2b_3}\right)
E_2E_4\cdot S,
\]
\[
E_1E_3\cdot S=\frac{b_3}{b_4}
\left(\frac{b_1}{b_2}-\frac{c_1c_3}{b_2b_3}\right)
E_2E_4\cdot S,\ 
E_2E_3\cdot S=\left(\frac{b_3}{b_4}-\frac{c_1c_3}{b_1b_4}\right)
E_2E_4\cdot S,
\]
\[
E_3^2\cdot S=\frac{c_3}{b_2}
\left(\frac{b_3}{b_4}-\frac{c_1c_3}{b_1b_4}\right)
E_2E_4\cdot S.
\]
On the other hand, for any $1\le i,j\le d-2$, we have 
\[
D_iD_j\cdot S=\left(\frac{a_i}{b_2}E_2+\frac{e_i}{b_4}E_4\right)
\cdot
\left(\frac{a_j}{b_2}E_2+\frac{e_j}{b_4}E_4\right)\cdot S
\]
\[
=\frac{a_ia_j}{b_2^2}E_2^2\cdot S+\frac{e_ie_j}{b_4^2}E_4^2\cdot S
+\frac{a_ie_j+a_je_i}{b_2b_4}E_2E_4\cdot S
\]
\[
=\left(-\frac{a_ia_jc_1}{b_1b_2b_4}-
\frac{c_3e_ie_j}{b_2b_3b_4}+\frac{a_ie_j+a_je_i}{b_2b_4}
\right)E_2E_4\cdot S,
\]
while 
\[
D_iE_1\cdot S=\frac{e_i}{b_4}E_1E_4\cdot S=
\frac{e_i}{b_4}\left(
\frac{b_1}{b_2}-\frac{c_1c_3}{b_2b_3}
\right)E_2E_4\cdot S,
\]
\[
D_iE_2\cdot S=\frac{a_i}{b_2}E_2^2\cdot S
+\frac{e_i}{b_4}E_2E_4\cdot S=
\left(
\frac{e_i}{b_4}-\frac{a_ic_1}{b_1b_4}
\right)E_2E_4\cdot S,
\]
\[
D_iE_3\cdot S=\frac{a_i}{b_2}E_2E_3\cdot S=
\frac{a_i}{b_2}\left(
\frac{b_3}{b_4}-\frac{c_1c_3}{b_1b_4}
\right)E_2E_4\cdot S,
\]
\[
D_iE_4\cdot S=\frac{a_i}{b_2}E_2E_4\cdot S
+\frac{e_i}{b_4}E_4^2\cdot S=
\left(
\frac{a_i}{b_2}-\frac{c_3e_i}{b_2b_3}
\right)E_2E_4\cdot S,
\]
for $1\le i\le d-2$. 
Put $\beta:=D_2D_4\cdot S$. 
By multiplying these intersection numbers by 
the positive rational number $b_1b_2b_3b_4$, 
we have the following tables of intersection numbers:
\begin{table}[htb]
  \begin{tabular}{|c||c|c|c|c|} \hline
$\displaystyle{\frac{b_1b_2b_3b_4}{\beta}E_iE_j\cdot S}$
& $E_1$ & $E_2$ & $E_3$ & $E_4$ \\ \hline \hline
    $E_1$ & $b_1^2b_3c_1-b_1c_1^2c_3$ & $0$ & 
    $b_1^2b_3^2-b_1b_3c_1c_3$ & 
    $b_1^2b_3b_4-b_1b_4c_1c_3$ \\ \hline
    $E_2$ 
& & $-b_2^2b_3c_1$ & 
    $b_1b_2b_3^2-b_2b_3c_1c_3$ & $b_1b_2b_3b_4$ \\ \hline
    $E_3$ 
 & & &
    $b_1b_3^2c_3-b_3c_1c_3^2$ & $0$ \\ \hline
    $E_4$ 
& & & &
$-b_1b_4^2c_3$ \\ \hline
  \end{tabular}
\end{table}

\begin{table}[htb]
  \begin{tabular}{|c||c|c|c|c|} \hline
$\displaystyle{\frac{b_1b_2b_3b_4}{\beta}D_iE_j\cdot S}$
& $E_1$ & $E_2$ & $E_3$ & $E_4$ \\ \hline \hline
    $D_i$ & $b_1^2b_3e_i-b_1c_1c_3e_i$ & 
$b_1b_2b_3e_i-a_ib_2b_3c_1$ & 
    $a_ib_1b_3^2-a_ib_3c_1c_3$ & 
    $a_ib_1b_3b_4-b_1b_4c_3e_i$ \\ \hline
  \end{tabular}
\end{table}
\noindent
for $1\le i\le d-2$. 
\[
\frac{b_1b_2b_3b_4}{\beta}D_iD_j\cdot S=
-a_ia_jb_3c_1-e_ie_jb_1c_3+a_ie_jb_1b_3+a_je_ib_1b_3\mbox{ for }
1\le i,j\le d-2.
\]
Put 
\[
f_1:=b_1Y_1+b_2Y_2+c_3Y_3,\ f_2:=b_3Y_3+b_4Y_4+c_1Y_1,
\]
\[
g_1:=\sum_{i=1}^{d-2}a_iX_i
\mbox{ and } 
g_2:=\sum_{i=1}^{d-2}e_iX_i
\in\mathcal{R}(X).
\]
Then, 
\[
-b_3c_1\left(b_1Y_1+b_2Y_2+c_3Y_3+
\sum_{i=1}^{d-2}a_iX_i\right)^2+
\]
\[
2b_1b_3
\left(b_1Y_1+b_2Y_2+c_3Y_3+\sum_{i=1}^{d-2}a_iX_i\right)
\left(b_3Y_3+b_4Y_4+c_1Y_1+
\sum_{i=1}^{d-2}e_iX_i\right)
\]
\[
-b_1c_3\left(b_3Y_3+b_4Y_4+c_1Y_1+\sum_{i=1}^{d-2}e_iX_i\right)^2
\]
\[
=-b_3c_1(f_1+g_1)^2+
2b_1b_3
(f_1+g_1)
(f_2+g_2)
-b_1c_3(f_2+g_2)^2
\]
\[
=\left(-b_3c_1f_1^2+2b_1b_3f_1f_2-b_1c_3f_2^2\right)+
\left(-b_3c_1g_1^2+2b_1b_3g_1g_2-b_1c_3g_2^2\right)
\]
\[
+\left(-2b_3c_1f_1g_1+2b_1b_3f_1g_2+2b_1b_3f_2g_1
-2b_1c_3f_2g_2\right).
\]
Thus, one can easily check that every coefficient of 
this polynomial coincides with the intersection number 
calculated above.
\end{proof}

In Proposition \ref{rho2lemma}, 
if $c_1=c_3=a_1e_1=\cdots =a_{d-2}e_{d-2}=0$, 
we have $\gamma_2\cdot S=0$. In particular, the following holds:

\begin{prop}\label{directproduct}
Let $X$ be a $\mathbb{Q}$-factorial projective toric $d$-fold. 
If there exists a toric finite morphism 
$\pi:X'\to X$ such that $X'$ is a direct product of lower-dimensional  
$\mathbb{Q}$-factorial projective 
$\gamma_2$-nef toric varieties, 
then $X$ is also $\gamma_2$-nef (but not $\gamma_2$-positive). 
\end{prop}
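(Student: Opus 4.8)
The plan is to push everything down to the combinatorics of the fan. A finite toric morphism $\pi\colon X'\to X$ is induced by an inclusion $N'\subseteq N$ of finite index that becomes an isomorphism after $\otimes\,\mathbb{R}$, so $X$ and $X'$ carry the \emph{same} fan $\Sigma$ as a set of cones in $N_{\mathbb R}=N'_{\mathbb R}$, only the lattice (hence the primitive generators and the multiplicities) changing. Since $X'=X_1\times\cdots\times X_k$ is a direct product with $k\ge2$, the fan $\Sigma$ is a product fan: every cone equals $\tau_1\times\cdots\times\tau_k$ with $\tau_j\in\Sigma_j$, each primitive generator lies in one summand of $N_{\mathbb R}=(N_1)_{\mathbb R}\oplus\cdots\oplus(N_k)_{\mathbb R}$, and therefore every linear relation among primitive generators splits as a sum of relations, one inside each summand. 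I would first record this, and then classify the torus invariant surfaces of $X$: such an $S$ corresponds to a codimension-$2$ cone $\tau=\tau_1\times\cdots\times\tau_k$, so either (a) one $\tau_i$ has codimension $2$ in $\Sigma_i$ and the other $\tau_j$ are maximal, or (b) two of them, say $\tau_i$ and $\tau_j$, have codimension $1$ and the others are maximal.

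Next I would treat the surfaces of type (b), where $\rho(S)=2$ and Proposition \ref{rho2lemma} applies: in its notation $y_1,y_2$ lie in the $i$-th summand and $y_3,y_4$ in the $j$-th, while $x_1,\dots,x_{d-2}$ are distributed among all summands. Splitting the first relation summand by summand, its $j$-th component reads $c_3y_3+\sum_{x_l\in(N_j)_{\mathbb R}}a_lx_l=0$; but $y_3$ together with those $x_l$ span a maximal, hence simplicial, hence $\mathbb{R}$-linearly independent, cone of $\Sigma_j$, so $c_3=0$ and all of those $a_l$ vanish. Symmetrically the $i$-th component of the second relation forces $c_1=0$ and the vanishing of every $e_l$ whose $x_l$ lies in the $i$-th summand, and the components in the remaining summands kill the other $a_l$ and $e_l$. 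Thus $c_1=c_3=0$ and $a_le_l=0$ for every $l$, so by the Remark preceding this Proposition $\gamma_2\cdot S=0$. As $k\ge2$ and each $X_j$ is complete (so has torus invariant curves), surfaces of type (b) do occur, which already shows $X$ is not $\gamma_2$-positive.

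Finally, for the surfaces of type (a) the same splitting shows that all neighbouring rays and all $x_l$ with nonzero coefficient already lie in the $i$-th summand, so the relations controlling $\gamma_2\cdot S$ coincide with those attached to the corresponding surface $S_i\subseteq X_i$ — read, however, with the primitive generators of $N$ rather than of $N_i$. If $\rho(S)=1$, Proposition \ref{rho1lemma} gives $\gamma_2\cdot S>0$. The case $\rho(S)=2$ is the step I expect to be the main obstacle: passing from $N_i$ to $N$ rescales primitive generators and hence self-intersection numbers, so one must verify that inserting the rescaled relations into the formula of Proposition \ref{rho2lemma} still returns a nonnegative number, i.e.\ that the $\gamma_2$-nefness of the factor $X_i$ survives transport through $\pi$ for these surfaces. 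I would first dispose of the case $X=X'$, where a direct computation with Proposition \ref{kotensu} — using that every $D_\rho$ with $\rho$ outside the $i$-th summand restricts trivially to $S\cong S_i$ — gives $\gamma_2\cdot S=\gamma_2(X_i)\cdot S_i\ge0$, and then handle the genuinely twisted case with whatever extra hypothesis is needed (smoothness of the factors, or restriction to the three-folds classified in this paper, all of whose torus invariant surfaces have Picard number at most $2$). Once type (a) is settled, $\gamma_2\cdot S\ge0$ for every torus invariant surface of $X$, so $X$ is $\gamma_2$-nef.
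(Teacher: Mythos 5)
Your treatment of the ``mixed'' surfaces (your type (b)) is correct, and it is in substance the entirety of the paper's own proof: the paper sets $S=C_1\times C_2$, observes that the two wall relations of Proposition \ref{rho2lemma} involve disjoint subsets of $\G(\Sigma)$ because the fan is a product fan, concludes $c_1=c_3=a_1e_1=\cdots=a_{d-2}e_{d-2}=0$ and hence $\gamma_2\cdot S=0$, and stops. Your component-by-component argument, using that maximal cones are simplicial so each component of a wall relation must vanish identically, is a cleaner justification of the same vanishing, and it gives non-$\gamma_2$-positivity exactly as the paper intends. The paper, however, dismisses the finite cover with the single sentence ``it is sufficient to prove for the case where $X$ itself is a direct product,'' offers no justification, and never discusses the surfaces lying in a single factor.

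The step you flag as the main obstacle is a genuine gap, and in fact the statement fails there in general, so no amount of extra care closes it without an added hypothesis. Take $S'$ to be the Gorenstein del Pezzo surface of ID $9$ in Table $1$, with $\G=\{v_1=(0,1),\,v_2=(1,1),\,v_3=(-1,-2),\,v_4=(-1,0)\}$ and wall relations $v_1+v_3+v_2=0$ and $v_2+v_4-v_1=0$; Proposition \ref{rho2lemma} gives $\alpha\gamma_2(S')=3+0-3=0$, so $S'$ is $\gamma_2$-nef. Enlarging the lattice to $\mathbb{Z}\times\tfrac12\mathbb{Z}$ replaces the primitive generators by $w_1=\tfrac12v_1,\,w_2=v_2,\,w_3=v_3,\,w_4=v_4$, the relations become $2w_1+w_3+w_2=0$ and $w_2+w_4-2w_1=0$, and the same formula now yields $\alpha\gamma_2=2\cdot 6+4\cdot(-3)-2\cdot 6=-12<0$. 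Hence for $N'=\mathbb{Z}^4\subset N=(\mathbb{Z}\times\tfrac12\mathbb{Z})\oplus\mathbb{Z}^2$ the finite toric morphism $X'=S'\times S'\to X=X_{\Sigma,N}$ has source a product of $\gamma_2$-nef surfaces, while the fibre $S\times\{pt\}\subset X$ (to which the divisors from the second factor restrict numerically trivially) has $\gamma_2(X)\cdot S<0$. So the reduction to $X=X'$ is illegitimate precisely for your type (a) surfaces: rescaling individual primitive generators does not rescale the quantity in Proposition \ref{rho2lemma} by a common positive factor. The proposition is safe only when $\pi$ is an isomorphism (your untwisted case, which you handle correctly) or when every factor is a curve so that only type (b) surfaces occur --- and these are the only situations in which it is actually invoked in Section \ref{fano3}. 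Your instinct to reserve the twisted type (a) case for ``whatever extra hypothesis is needed'' was exactly right, and your write-up localizes the defect more honestly than the published proof does.
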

\begin{proof}
It is sufficient to prove for the case where $X$ itself is a direct product. 
Suppose $X=X_1\times X_2$. For any torus invariant curves 
$C_1\subset X_1$ and $C_2\subset X_2$, put $S:=C_1\times C_2$. Then, 
for this $S$, the set of elements in $\G(\Sigma)$ which appear in 
the left-hand side of 
\[
b_1y_1+b_2y_2+c_3y_3+a_1x_1+\cdots+a_{d-2}x_{d-2}=0
\]
and the one of 
\[
b_3y_3+b_4y_4+c_1y_1+e_1x_1+\cdots+e_{d-2}x_{d-2}=0
\]
have no common element, 
because $X$ is a direct product. 
Since $b_1\neq 0$ and $b_3\neq 0$,  consequently,  $c_1=c_3=a_1e_1=\cdots =a_{d-2}e_{d-2}=0$. 
Thus, 
we have 
$\gamma_2\cdot (C_1\times C_2)=0$ as seen above. 
Therefore, $X$ is $\gamma_2$-nef (but not $\gamma_2$-positive). 
\end{proof}

\section{Gorenstein del pezzo surfaces}\label{delpezzo} 

In order to understand the $3$-folds studied in Section \ref{fano3} 
more deeply, 
we classify the Gorenstein $\gamma_2$-nef toric del Pezzo surfaces 
in this section. 
In this case, $\gamma_2$ is a rational number. 

\smallskip

In general, the case where $\rho(X)=1$ for any dimension 
can be determined easily as follows:

\begin{prop}\label{rho1general}
If $X=X_\Sigma$ is a $\mathbb{Q}$-factorial 
projective toric $d$-fold of $\rho(X)=1$, then 
$X$ is $\gamma_2$-positive.
\end{prop}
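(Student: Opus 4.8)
The plan is to reduce the statement to Proposition~\ref{rho1lemma}, by showing that when $\rho(X)=1$ \emph{every} torus invariant subsurface $S\subset X$ automatically satisfies $\rho(S)=1$.

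First I would recall the combinatorics of a $\mathbb{Q}$-factorial projective toric $d$-fold $X=X_\Sigma$ with $\rho(X)=1$, that is, of a fake weighted projective space. Since $\rho(X)=\#\G(\Sigma)-d=1$, the fan $\Sigma$ has exactly $d+1$ rays, with primitive generators $x_1,\ldots,x_{d+1}$ that span $N_{\mathbb{R}}$ and satisfy a linear relation $a_1x_1+\cdots+a_{d+1}x_{d+1}=0$, unique up to multiplication by a positive rational number, with all $a_i>0$. I would then verify that, because $\Sigma$ is a complete simplicial fan on these $d+1$ rays, its maximal cones are exactly the $d+1$ cones generated by the $d$-element subsets $\{x_1,\ldots,x_{d+1}\}\setminus\{x_j\}$ for $1\le j\le d+1$; equivalently, a subset of $\{x_1,\ldots,x_{d+1}\}$ spans a cone of $\Sigma$ if and only if it is a proper subset. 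The short argument is that, given $v\in N_{\mathbb{R}}$, one may adjust any representation $v=\sum c_ix_i$ by adding a suitable multiple of the relation so that all the coefficients become non-negative with at least one equal to $0$; hence the cones $\sigma_j:=\sum_{i\neq j}\mathbb{R}_{\ge 0}x_i$ already cover $N_{\mathbb{R}}$, and one checks that they fit together into a fan.

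Next, let $S\subset X$ be a torus invariant subsurface; we may assume $d\ge 2$, for otherwise there is nothing to prove. Then $S=V_\tau$ for a $(d-2)$-dimensional cone $\tau\in\Sigma$, and since $\Sigma$ is simplicial, $\tau$ is generated by $d-2$ of the rays, say $\tau\cap\G(\Sigma)=\{x_1,\ldots,x_{d-2}\}$ after relabelling. By the first paragraph, the maximal cones of $\Sigma$ containing $\tau$ are exactly the three cones $\mathbb{R}_{\ge 0}x_a+\mathbb{R}_{\ge 0}x_b+\tau$ with $\{x_a,x_b\}\subset\{x_{d-1},x_d,x_{d+1}\}$, and each of $x_{d-1},x_d,x_{d+1}$ determines a distinct ray of the star fan of $\tau$. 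Hence the complete toric surface $S$ has exactly three torus invariant prime divisors, and therefore $\rho(S)=1$.

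Finally, I would apply Proposition~\ref{rho1lemma} to $S$: since $\rho(S)=1$ we obtain $\gamma_2\cdot S>0$, and as $S$ was an arbitrary $2$-dimensional torus invariant subvariety of $X$, this shows $X$ is $\gamma_2$-positive. All of the intersection-theoretic content is thus carried by Proposition~\ref{rho1lemma}; the step that genuinely needs care is the structural claim in the first paragraph about the maximal cones of $\Sigma$, which is the only place where completeness and the hypothesis $\rho(X)=1$ are used.
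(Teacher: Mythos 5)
Your proof is correct and takes essentially the same approach as the paper: the paper's proof simply asserts that every torus invariant subsurface of $X$ has Picard number $1$ and then invokes Proposition \ref{rho1lemma}. You supply the combinatorial justification of that assertion (via the fan structure of a fake weighted projective space), which the paper leaves implicit, and your justification is sound.
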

\begin{proof}
Any torus invariant subsurface of $X$ is of Picard number $1$. 
Therefore, this proposition is an immediate consequence from Proposition 
\ref{rho1lemma}. 
\end{proof}

For a projective toric surface $S$, the polynomial $I_{S/S}$ defined in 
Definition \ref{polynomial} can be calculated inductively: 

Let $S=S_\Sigma$ be a projective toric surface. 
For a maximal cone $\sigma=\mathbb{R}_{\ge 0}x_1+
\mathbb{R}_{\ge 0}x_2\in\Sigma$ and 
a primitive lattice point $y$ in the interior of $\sigma$, 
we put 
\[
\widetilde{\Sigma}:=\left(\Sigma\setminus\{\sigma\}\right)
\cup\left\{\mathbb{R}_{\ge 0}y,\ \mathbb{R}_{\ge 0}x_1+
\mathbb{R}_{\ge 0}y,\ \mathbb{R}_{\ge 0}y+
\mathbb{R}_{\ge 0}x_2\right\}. 
\]
Namely, $\widetilde{\Sigma}$ is obtained from $\Sigma$ by 
subdividing $\sigma$ at $\mathbb{R}_{\ge 0}y$. Thus, 
we obtain the projective toric surface $\widetilde{S}:=
S_{\widetilde{\Sigma}}$ of Picard number 
$\rho(\widetilde{S})=\rho(S)+1$. 

Without loss of generality, we may assume 
\[
x_1=(1,0),\ x_2=(p,q),\ y=(r,s),
\]
where $p,q,r,s\in\mathbb{Z}$ and $q>0,s>0,qr-ps>0$. 
The linear relation 
\[
qy=(qr-ps)x_1+sx_2
\]
holds. 
We prepare two additional elements 
$x_3=(a_3,b_3),x_4=(a_4,b_4)
\in\G(\Sigma)\subset\G(\widetilde{\Sigma})$ 
such that 
$\mathbb{R}_{\ge 0}x_1+\mathbb{R}_{\ge 0}x_3$ and 
$\mathbb{R}_{\ge 0}x_2+\mathbb{R}_{\ge 0}x_4$ are 
maximal cones in $\Sigma$ (the case where $x_3=x_4$ 
is admitted). 
Let $D_1,D_2,D_3,D_4$ 
be the torus invariant prime divisors on $S$ associated to 
$x_1,x_2,x_3,x_4$, respectively, while 
$\widetilde{D}_1,\widetilde{D}_2,\widetilde{D}_3,
\widetilde{D}_4,E$ 
be the torus invariant prime divisors on 
$\widetilde{S}$ associated to $x_1,x_2,x_3,x_4,y$, 
respectively. 

\begin{prop}\label{surfacebu}
Under the above setting, we have 
\[
I_{\widetilde{S}/\widetilde{S}}=I_{S/S}
-\frac{1}{qs(qr-ps)}\left((qr-ps)X_1+sX_2-qY\right)^2.
\]
\end{prop}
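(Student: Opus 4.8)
The plan is to compare the intersection theory on $S$ and on $\widetilde S$ divisor by divisor, using Proposition \ref{kotensu} to evaluate every self-intersection and pairwise intersection explicitly in terms of the combinatorial data $p,q,r,s$. Recall that on a toric surface, for adjacent rays the intersection number of the two corresponding prime divisors equals $1/\mult(\sigma)$ where $\sigma$ is the spanned maximal cone, and self-intersections are recovered from the wall relations. The only maximal cones that change when we subdivide are the three cones $\mathbb{R}_{\ge 0}x_1+\mathbb{R}_{\ge 0}x_2$, which is removed, and $\mathbb{R}_{\ge 0}x_1+\mathbb{R}_{\ge 0}y$, $\mathbb{R}_{\ge 0}y+\mathbb{R}_{\ge 0}x_2$, which are added; all cones not involving $x_1,x_2,y$ are untouched. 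Hence $I_{\widetilde S/\widetilde S}$ and $I_{S/S}$ agree on all monomials $X_iX_j$ with $i,j\notin\{1,2\}$ (and corresponding to rays other than those adjacent to the modified cones), and the whole content of the proposition is a local computation around the subdivided cone.

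The key steps, in order: first, normalize coordinates so that $x_1=(1,0)$, $x_2=(p,q)$, $y=(r,s)$ as in the statement, and record the wall relation $qy=(qr-ps)x_1+sx_2$, noting $q,s,qr-ps>0$. Second, on $S$ compute $D_1^2,D_1D_2,D_2^2$ and the ``cross terms'' $D_1D_3,D_2D_4$ (with $D_3,D_4$ the neighbors of $x_1,x_2$ on the other side), expressing the self-intersections via the wall relations at the walls $\mathbb{R}_{\ge 0}x_1$ and $\mathbb{R}_{\ge 0}x_2$. Third, do the same on $\widetilde S$: now $x_1$ and $x_2$ each acquire a new neighbor, namely $y$, and $E=\widetilde D_y$ is a new prime divisor; compute $\widetilde D_1^2,\widetilde D_2^2, E^2, \widetilde D_1 E, \widetilde D_2 E$, and observe that $\widetilde D_1\widetilde D_2=0$ since $x_1,x_2$ are no longer adjacent. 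Fourth, form the difference $I_{\widetilde S/\widetilde S}-I_{S/S}$: all terms not involving $X_1,X_2,Y$ cancel, the terms involving $X_1X_j$ or $X_2X_j$ for $j$ a far neighbor cancel because the relevant mult's are unchanged, and what survives is a quadratic form in $X_1,X_2,Y$ supported on exactly the monomials $X_1^2,X_2^2,Y^2,X_1X_2,X_1Y,X_2Y$. Fifth, identify this surviving form: using the wall relation to substitute $Y\leftrightarrow \frac{(qr-ps)X_1+sX_2}{q}$ and the mult computations $\mult(\mathbb{R}_{\ge 0}x_1+\mathbb{R}_{\ge 0}y)=s$, $\mult(\mathbb{R}_{\ge 0}y+\mathbb{R}_{\ge 0}x_2)=qr-ps$ (after the normalization), one checks that the difference equals $-\frac{1}{qs(qr-ps)}\big((qr-ps)X_1+sX_2-qY\big)^2$ by matching all six coefficients.

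Concretely, the slick way to organize the bookkeeping is to note that the linear equivalence relation on $\widetilde S$ reads $\widetilde D_y \equiv (\text{combination not meeting }E)$ coming from $qy=(qr-ps)x_1+sx_2$, so the ``new'' prime divisor $E$ satisfies an $I$-contribution that is, up to a scalar, a perfect square in $(qr-ps)X_1+sX_2-qY$; this is exactly the same mechanism as in the Remark following Proposition \ref{rho1lemma}, where a single wall relation $\sum a_i x_i=0$ forces $\alpha I_{S/X}=(\sum a_i X_i)^2$ locally. So I would actually phrase step five as: ``the change in $I$ is localized at the new ray $\mathbb{R}_{\ge 0}y$, and by Proposition \ref{kotensu} together with the unique wall relation through that ray, this local contribution is a negative multiple of $\big((qr-ps)X_1+sX_2-qY\big)^2$, with the multiple determined by computing $E^2\cdot$ (i.e.\ the coefficient of $Y^2$), which is $\mult$-theoretically $-\frac{q}{s(qr-ps)}$.''

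The main obstacle will be the honest verification that the cross terms $D_1D_3$ and $D_2D_4$ (and the far self-intersections $D_3^2$, etc.) are genuinely unchanged under the subdivision, i.e.\ that no ``hidden'' terms leak into the difference beyond the three-variable form; this requires carefully tracking that subdividing $\sigma$ does not alter any mult away from $\sigma$ and does not change the linear equivalence class of $D_i$ for $i\ne 1,2$ when restricted to the relevant computation. Once that is pinned down, the remainder is the routine coefficient matching of a six-term quadratic, which I would present compactly as a single displayed computation or a small table rather than term by term.
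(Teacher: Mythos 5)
Your proposal is correct and follows essentially the same route as the paper: compute the local intersection numbers on $S$ and $\widetilde S$ via Proposition \ref{kotensu} together with the linear-equivalence relations, observe that all multiplicities and relations away from the subdivided cone are unchanged (so only the monomials in $X_1,X_2,Y$ survive in the difference), and match the six remaining coefficients against the stated perfect square. One caveat: your ``slick'' reformulation of step five --- that the local contribution must \emph{a priori} be a negative multiple of $\bigl((qr-ps)X_1+sX_2-qY\bigr)^2$ determined by the single coefficient of $Y^2$ --- is a heuristic rather than a proof as stated, so the explicit coefficient matching you describe in steps two through five should remain the actual argument, exactly as in the paper.
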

\begin{proof}
We calculate the intersection numbers of 
$\widetilde{D}_1,\widetilde{D}_2,\widetilde{D}_3,
\widetilde{D}_4,E$. 

On $S$, we have 
\[
D_1D_2=\frac{1}{\left|\det
\left(
\begin{array}{rr}
1 & p \\
0 & q \\
\end{array}
\right)
\right|}
=\frac{1}{q}\
\]
by Proposition \ref{kotensu}. The relations 
\[
D_1+pD_2+a_3D_3+a_4D_4+\cdots=0\mbox{ and }
qD_2+b_3D_3+b_4D_4+\cdots=0
\]
tells us that 
\[
D_1^2=-\frac{p}{q}-a_3D_1D_3\mbox{ and }
D_2^2=-\frac{b_4}{q}D_2D_4.
\]

On the other hand, on $\widetilde{S}$, 
\[
\widetilde{D}_1E=\frac{1}{\left|\det\left(
\begin{array}{rr}
1 & r \\
0 & s \\
\end{array}
\right)\right|}
=\frac{1}{s}\mbox{ and }
\widetilde{D}_2E=\frac{1}{\left|\det\left(
\begin{array}{rr}
r & p \\
s & q \\
\end{array}
\right)\right|}
=\frac{1}{qr-ps}
\]
hold by Proposition \ref{kotensu}. 
The relations 
\[
\widetilde{D}_1+p\widetilde{D}_2+a_3\widetilde{D}_3+
a_4\widetilde{D}_4+rE+\cdots=0\mbox{ and }
q\widetilde{D}_2+b_3\widetilde{D}_3+b_4\widetilde{D}_4+
sE+\cdots=0
\]
tells us that 
\[
\widetilde{D}_1^2=-a_3\widetilde{D}_1\widetilde{D}_3
-\frac{r}{s},\ 
\widetilde{D}_2^2=-\frac{b_4}{q}\widetilde{D}_2\widetilde{D}_4
-\frac{s}{q(qr-ps)}\mbox{ and }
E^2=-\frac{q}{s(qr-ps)}.
\]
Additionally, we remark that 
$D_1D_3=\widetilde{D}_1\widetilde{D}_3$
 and 
 $D_2D_4=\widetilde{D}_2\widetilde{D}_4$. 

Thus, we obtain 
\[
I_{\widetilde{S}/\widetilde{S}}-I_{S/S}=
-\frac{2}{q}X_1X_2+\left(\frac{p}{q}-\frac{r}{s}\right)X_1^2-\frac{s}{q(qr-ps)}X_2^2
\]
\[
+\frac{2}{s}X_1Y+\frac{2}{qr-ps}X_2Y-\frac{q}{s(qr-ps)}Y^2,
\]
where $X_1,X_{2},X_3,X_4,Y$ are the 
independent variables of polynomials corresponding 
to $x_1,x_2,x_3,x_4,y$, respectively. 
\end{proof}

The following is an immediate consequence of Proposition \ref{surfacebu}:

\begin{cor}\label{surfacegamma}
Let  $\varphi:\widetilde{S}
\to S$ be a toric birational morphism 
between projective toric surfaces $\widetilde{S}$ 
and $S$. Then, $\gamma_2(\widetilde{S})<\gamma_2(S)$. 
\end{cor}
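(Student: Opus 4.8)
The plan is to reduce Corollary \ref{surfacegamma} to the single blow-up case treated in Proposition \ref{surfacebu}. First I would recall that any toric birational morphism $\varphi:\widetilde{S}\to S$ between projective toric surfaces factors as a finite sequence of toric blow-ups at torus-fixed points; equivalently, the fan of $\widetilde{S}$ is obtained from the fan of $S$ by successively inserting a new ray in the interior of a maximal (two-dimensional) cone. This is the standard structure theorem for toric surface morphisms (see \cite{cls}, \cite{oda}), so I would simply cite it. Thus it suffices to compare $\gamma_2$ across one such elementary subdivision and then iterate.

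For one elementary subdivision, I would apply Proposition \ref{surfacebu} directly. Evaluating the identity there at $X_1=\cdots=X_n=Y=1$ turns each polynomial $I_{S/S}$, $I_{\widetilde{S}/\widetilde{S}}$ into the corresponding rational number $\gamma_2(S)$, $\gamma_2(\widetilde{S})$, since by Definition \ref{polynomial} and the surface case $\gamma_2(S)=\sum_{i,j}D_i\cdot D_j = I_{S/S}(1,\dots,1)$. The correction term becomes
\[
\gamma_2(\widetilde{S})-\gamma_2(S)
=-\frac{1}{qs(qr-ps)}\bigl((qr-ps)+s-q\bigr)^2
=-\frac{(qr-ps+s-q)^2}{qs(qr-ps)}.
\]
The denominator $qs(qr-ps)$ is strictly positive by the standing assumptions $q>0$, $s>0$, $qr-ps>0$ on the coordinates of $x_1,x_2,y$. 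Hence the right-hand side is $\le 0$, so $\gamma_2(\widetilde{S})\le\gamma_2(S)$ for a single blow-up. Chaining this over the factorization of $\varphi$ gives $\gamma_2(\widetilde{S})\le\gamma_2(S)$ in general.

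To get the strict inequality claimed, I would argue that the numerator $(qr-ps+s-q)^2$ cannot vanish at every step. If it did vanish at some step, then $qr-ps = q-s$; but since the subdivision is a genuine subdivision, $E$ is a $\varphi$-exceptional curve with $E^2 = -q/(s(qr-ps)) < 0$, and one checks this forces a nontrivial negative contribution unless the three rays involved are collinear, which is impossible for a fan. More simply: among all the elementary blow-ups composing $\varphi$, consider the \emph{last} one (the one closest to $\widetilde S$), whose exceptional ray $\mathbb{R}_{\ge 0}y$ really is a new ray not in the fan of $S$; for that step $y=(r,s)$ is not proportional to $x_1=(1,0)$ nor to $x_2=(p,q)$, and a direct check shows $qr-ps+s-q$ cannot be zero when $q,s>0$, $qr-ps>0$ and $y$ is primitive distinct from $x_1,x_2$ — indeed $qr-ps+s-q = q(r-1)+s(1-p)$, and vanishing of this together with $qr-ps>0$ pins down $r,s$ to values making $y$ non-primitive or equal to $x_1$. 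So at least one step contributes a strictly negative amount, and all other steps contribute $\le 0$, giving $\gamma_2(\widetilde{S})<\gamma_2(S)$.

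The main obstacle I anticipate is the strictness: confirming rigorously that the numerator $(qr-ps+s-q)^2$ is strictly positive at (at least) one step of the factorization, i.e. ruling out the degenerate arithmetic configuration $q(r-1)=s(p-1)$ under the primitivity and positivity constraints. The inequality $\gamma_2(\widetilde{S})\le\gamma_2(S)$ itself is immediate from Proposition \ref{surfacebu} and the sign of $qs(qr-ps)$; everything else is bookkeeping over the factorization of $\varphi$.
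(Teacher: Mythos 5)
Your overall strategy---factor $\varphi$ into elementary star subdivisions and apply Proposition \ref{surfacebu} to each step---is exactly how the paper intends the corollary to follow. But there is a genuine error in how you pass from the polynomial identity to a statement about $\gamma_2$. Setting $X_1=\cdots=X_n=Y=1$ in $I_{S/S}=\sum_{i,j}(D_i\cdot D_j)X_iX_j$ computes $\sum_{i,j}D_i\cdot D_j=(D_1+\cdots+D_n)^2=(-K_S)^2$, \emph{not} $\gamma_2(S)=\sum_i D_i^2$ (for $\mathbb{P}^2$ these are $9$ and $3$). The correct way to extract $\gamma_2$ from $I_{S/S}$ is to sum only the coefficients of the squared variables $X_i^2$; this is precisely what the paper does in the ``in particular'' clause of Proposition \ref{rho2lemma}. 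For the square of a linear form $\lambda_1X_1+\cdots+\lambda_nX_n$ that sum is $\lambda_1^2+\cdots+\lambda_n^2$, so Proposition \ref{surfacebu} gives
\[
\gamma_2(\widetilde{S})-\gamma_2(S)=-\frac{(qr-ps)^2+s^2+q^2}{qs(qr-ps)}<0,
\]
strictly negative at every elementary step since $q,s,qr-ps>0$. With the correct extraction, strictness is automatic and your entire third paragraph becomes unnecessary.

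The strictness patch you propose is moreover false as stated. Take $x_1=(1,0)$, $x_2=(1,2)$, $y=(1,1)$, i.e.\ $p=1$, $q=2$, $r=s=1$, so $qr-ps=1>0$ and $y$ is primitive, interior to the cone, and distinct from $x_1,x_2$; yet $qr-ps+s-q=1+1-2=0$, so your quantity $(qr-ps+s-q)^2$ vanishes for a perfectly legitimate subdivision. Concretely, for $S$ with rays $(1,0),(1,2),(-1,-1)$ one has $\gamma_2(S)=3$, while the star subdivision at $(1,1)$ produces a smooth surface $\widetilde{S}$ with $\gamma_2(\widetilde{S})=0$: the actual drop is $3=\frac{q^2+s^2+(qr-ps)^2}{qs(qr-ps)}$, whereas your version would predict a drop of $0$ and hence fail to prove even the non-strict inequality correctly. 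So the gap is not in the factorization (which is fine and standard) but in the identification of $\gamma_2$ with an evaluation of $I_{S/S}$.
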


By Corollary \ref{surfacegamma}, for the classification, 
it is sufficient to 
investigate Gorenstein toric del Pezzo surfaces of lower Picard 
numbers. The Gorenstein toric del Pezzo surfaces are 
classified by Koelman \cite{koelman}. There exist 
exactly $16$ Gorenstein toric del Pezzo surfaces, and 
we can get the list of them on the internet (see 
\cite{fano3fold}). 

The case of Picard number $1$ is done by 
Proposition \ref{rho1general}. For the next case where 
the Picard number is two, we show the following easy 
lemma:

\begin{lem}\label{symmetricpair}
Let $S=S_\Sigma$ be a projective toric surface of $\rho(S)=2$. 
If there exist $x_1,x_2\in\G(\Sigma)$ such that $x_1+x_2=0$, 
then $S$ is $\gamma_2$-nef. 
\end{lem}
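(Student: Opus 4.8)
The plan is to put the fan of $S$ into a normal form and then read off every self-intersection number directly. Since $S$ is a projective (hence complete) toric surface with $\rho(S)=2$, and toric surfaces are automatically $\mathbb{Q}$-factorial, its fan $\Sigma$ has exactly four rays; write their primitive generators as $x_1,x_2,x_3,x_4$ with $x_1+x_2=0$. Two antipodal primitive vectors cannot lie in a common strongly convex cone, so $x_1$ and $x_2$ are non-adjacent in $\Sigma$; applying an automorphism of $N$ I may assume $x_1=(1,0)$ and $x_2=(-1,0)$. Completeness of $\Sigma$ then forces exactly one of $x_3,x_4$ to lie strictly in the open upper half-plane and the other strictly in the open lower half-plane; relabelling if necessary, write $x_3=(a_3,b_3)$ with $b_3>0$ and $x_4=(a_4,b_4)$ with $b_4<0$, so that the maximal cones of $\Sigma$ are $\langle x_1,x_3\rangle$, $\langle x_3,x_2\rangle$, $\langle x_2,x_4\rangle$, $\langle x_4,x_1\rangle$.

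Next I would compute intersection numbers with Proposition \ref{kotensu}. Among the products $D_i\cdot D_j$ only $D_1\cdot D_2=0$ and $D_3\cdot D_4=0$ vanish (those pairs of rays span no cone), while $D_1\cdot D_3=D_2\cdot D_3=1/b_3$ and $D_1\cdot D_4=D_2\cdot D_4=-1/b_4$, each being the reciprocal of the multiplicity of the relevant two-dimensional cone. The lattice $M=\mathbb{Z}^2$ yields two relations in $\N^1(S)$: pairing with $(0,1)$ gives $b_3D_3+b_4D_4\equiv 0$, and pairing with $(1,0)$ gives $D_1-D_2+a_3D_3+a_4D_4\equiv 0$. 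Intersecting the first relation with $D_3$ and with $D_4$, and using $D_3\cdot D_4=0$ together with $b_3,b_4\neq 0$, gives $D_3^2=D_4^2=0$. Intersecting the second relation with $D_1$ and with $D_2$, and using $D_1\cdot D_2=0$, gives $D_1^2=-a_3/b_3+a_4/b_4$ and $D_2^2=a_3/b_3-a_4/b_4$, hence $D_1^2+D_2^2=0$.

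Adding these up, $\gamma_2(S)=D_1^2+D_2^2+D_3^2+D_4^2=0\ge 0$; as $S$ is the only torus invariant subsurface in $S$, this says $S$ is $\gamma_2$-nef (and, lying on the boundary, it is not $\gamma_2$-positive). As an alternative I would at most record in a remark, one can instead quote Proposition \ref{rho2lemma} with $d=2$, $y_1=x_1$, $y_2=x_2$: there are no $x_i$'s, the first displayed relation reads $b_1y_1+b_2y_2+c_3y_3=0$, and substituting $y_2=-y_1$ together with the linear independence of $y_1,y_3$ forces $b_1=b_2$ and $c_3=0$, after which the closed formula for $\alpha\,\gamma_2\cdot S$ collapses to $0$.

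The main (and only mildly delicate) obstacle is the combinatorial normalization at the start: one must use $x_1+x_2=0$ to determine which pairs of rays fail to span a cone — hence which of the $D_i\cdot D_j$ vanish — verify $b_3,b_4\neq 0$ so the divisions are legitimate, and keep track of the sign of each determinant when writing the cones in normal form. After that the argument is a short and mechanical intersection-number computation.
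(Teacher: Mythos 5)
Your argument is correct, and both of its prongs lead to the right conclusion; the main route you take is, however, genuinely different from the paper's. The paper disposes of the lemma in two lines by feeding the pair of wall relations $ax_3+bx_4-cx_1=0$ and $x_1+x_2=0$ into the closed formula of Proposition \ref{rho2lemma}, which collapses to $\alpha\gamma_2=2ac-2ac=0$; this is essentially the ``alternative'' you relegate to a remark (you place $x_1+x_2=0$ as the first relation, the paper as the second, but by the symmetry of the formula both give $0$). Your primary route instead normalizes the fan to $x_1=(1,0)$, $x_2=(-1,0)$, $x_3=(a_3,b_3)$, $x_4=(a_4,b_4)$ with $b_3>0>b_4$, and computes every $D_i\cdot D_j$ from scratch via Proposition \ref{kotensu} and the relations coming from $M$; the bookkeeping ($D_1D_2=D_3D_4=0$, $D_3^2=D_4^2=0$, $D_1^2=-D_2^2=-a_3/b_3+a_4/b_4$) is all correct, including the sign analysis showing $b_3,b_4\neq 0$ and that $x_3,x_4$ must lie in opposite open half-planes. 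What your direct computation buys is self-containedness (no reliance on the rather heavy Proposition \ref{rho2lemma}) and the extra information that each individual $D_i^2$ is determined, with $D_3^2=D_4^2=0$ reflecting the conic fibration induced by $x_1+x_2=0$; what the paper's route buys is brevity and consistency with how all the other cases in Sections \ref{delpezzo} and \ref{fano3} are handled. Either version is acceptable as a proof of the lemma.
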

\begin{proof}
Let $\G(\Sigma)=\{x_1,x_2,x_3,x_4\}$. Then, we may assume 
that there exists another relation $ax_3+bx_4-cx_1=0$ for 
$a,b,c\in\mathbb{Z}_{\ge 0}\ (a,b>0)$. By Proposition 
\ref{rho2lemma},  
there exists a positive rational number $\alpha$ such that 
$\alpha \gamma_2=ac\times(1+1)+2a\times(-c)=0$.
\end{proof}

There exists exactly one Gorenstein $\gamma_2$-nef 
(but not $\gamma_2$-positive) 
toric del Pezzo surface $S=S_\Sigma$ of $\rho(S)=2$ such that 
there is no centrally symmetric pair in $\G(\Sigma)$ 
(see ID $9$ in Table $1$ below). Therefore, 
by Corollary \ref{surfacegamma}, 
if $\rho(S)\ge 3$, then $S$ is not $\gamma_2$-nef. 

Thus, we obtain the following list of 
Gorenstein $\gamma_2$-nef 
toric del Pezzo surfaces. 
In the first column of the table, we describe 
ID of Kasprzyk's classification list: 
\[
\mathtt{
http://www.grdb.co.uk/forms/toricldp}
\] 
We describe a toric surface $S_\Sigma$ by giving 
all the elements in 
$\G(\Sigma)$ in the second column. 

\newpage

\begin{table}[htb]
\begin{center}
\caption{Gorenstein $\gamma_2$-nef 
toric del Pezzo surfaces}
\begin{tabular}{|c|l|}
\hline
ID  & $\G(\Sigma)$   \\
\hline\hline

$12$  & 
$(0,1), (3,1), (-3,-2)$ 
\\
\hline
$13$  & 
$(0,1), (4,1), (-2,-1)$ 
\\
\hline
$14$  & 
$(0,1), (2,1), (-3,-2)$ 
\\
\hline
$15$  & 
$(0,1), (2,1), (-1,-1)$ 
\\
\hline
$16$  & 
$(0,1), (1,1), (-1,-2)$ 
\\
\hline
$6$  & 
$(0,1), (2,1), (0,-1), (-2,-1)$ 
\\
\hline
$8$  & 
$(0,1), (1,1), (0,-1), (-2,-1)$ 
\\
\hline
$9$  & 
$(0,1), (1,1), (-1,-2), (-1,0)$ 
\\
\hline
$10$  & 
$(0,1), (1,1), (0,-1), (-1,0)$ 
\\
\hline
$11$  & 
$(0,1), (1,1), (0,-1), (-1,-1)$ 
\\
\hline
\end{tabular}
\end{center}
\end{table}

\section{Terminal Fano $3$-folds}\label{fano3} 

In this final section, we give the classification of 
$\mathbb{Q}$-factorial terminal 
$\gamma_2$-nef toric 
Fano $3$-folds. 

\begin{defn}
A $\mathbb{Q}$-factorial projective toric 
variety $X$ is a {\em Fano variety} if its anti-canonical 
divisor $-K_X$ is an ample divisor. 
\end{defn}

\begin{rem}
A $\gamma_1$-positive toric variety is nothing but a toric 
Fano variety.
\end{rem}

Terminal toric Fano $3$-folds are classified by 
Kasprzyk \cite{fano3fold}, and the classification list is 
available on the internet (see \cite{fano3fold}). 
There exist exactly $233$ $\mathbb{Q}$-factorial terminal 
toric 
Fano $3$-folds up to isomorphism. 
We checked the non-negativities of $\gamma_2$ for these 
$233$ $\mathbb{Q}$-factorial terminal 
toric Fano $3$-folds mainly by hand calculation. 
Partially, we used the software $\mathtt{polymake}$ (see 
\cite{polymake}). 
Thus, the following is the main theorem of this paper: 
\begin{thm}\label{mfo}
There exist exactly 
$23$ $\mathbb{Q}$-factorial terminal 
$\gamma_2$-nef toric 
Fano $3$-folds. 
\end{thm}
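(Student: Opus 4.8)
The plan is to combine the structural reductions developed in Sections \ref{forintersection} and \ref{delpezzo} with a finite, essentially mechanical check against Kasprzyk's list \cite{fano3fold} of the $233$ $\mathbb{Q}$-factorial terminal toric Fano $3$-folds. The starting observation is that, for a toric $3$-fold $X$, being $\gamma_2$-nef means $\gamma_2\cdot S\ge 0$ for every torus invariant surface $S\subset X$, where such $S$ corresponds to a ray of $\Sigma$ and is itself a complete toric surface. So the first step is to note that the Picard number $\rho(S)$ of each such $S$ can only be $1$, $2$, or $3$ (a complete toric surface has $\rho\ge 1$, and since $S$ arises from a ray in a $3$-dimensional fan it has at least three invariant curves, hence $\rho(S)\ge 1$; the relevant upper bound comes from the concrete fans in the list). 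Proposition \ref{rho1lemma} disposes of every $S$ with $\rho(S)=1$ (automatically $\gamma_2\cdot S>0$), so only the surfaces with $\rho(S)=2$ or $3$ need to be examined, and for those Proposition \ref{rho2lemma} gives a closed formula for $\gamma_2\cdot S$ in terms of the two linear relations among the rays neighbouring $\tau$.

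Next I would set up the bookkeeping: for each of the $233$ fans, list the $(d-2)=1$-dimensional cones $\tau$ (i.e.\ the rays), and for each determine the "link" configuration — the $3$ or $4$ rays forming maximal cones with $\tau$ — thereby reading off whether $\rho(S)=1,2$ or more. For the $\rho(S)=2$ cases one extracts the coefficients $b_1,b_2,b_3,b_4,c_1,c_3,a_1,e_1$ from the two wall relations and plugs into the displayed formula of Proposition \ref{rho2lemma}; the sign of $\gamma_2\cdot S$ is then decided by a single rational inequality. For $\rho(S)=3$ (which should occur only rarely, and in fact the del Pezzo analysis of Section \ref{delpezzo} shows that a $\gamma_2$-nef surface with $\rho\ge 3$ does not exist among Gorenstein ones) one uses Corollary \ref{surfacegamma}: any surface obtained by a nontrivial toric blow-up from a smaller one has strictly smaller $\gamma_2$, so once a dominating surface fails, so does $S$; concretely this lets one certify non-$\gamma_2$-nef-ness cheaply. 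The output of this pass is the sublist of $X$ all of whose invariant surfaces satisfy $\gamma_2\cdot S\ge 0$; the claim is that this sublist has exactly $23$ elements, which is then verified by exhibiting them (and this is where the table in the rest of the paper, not shown in this excerpt, does its work).

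The main obstacle is simply the scale and reliability of the case check: $233$ fans, each with several rays, each requiring a correct reading of the two wall relations and a correct sign evaluation — there is no conceptual difficulty but ample room for arithmetic slips. To control this I would (i) exploit symmetries: many of the $233$ toric Fano $3$-folds are products $C\times S$ or admit toric finite covers by products, and Proposition \ref{directproduct} immediately certifies those as $\gamma_2$-nef (indeed only nef, not positive), trimming the list of cases that need the general formula; (ii) exploit the del Pezzo classification of Section \ref{delpezzo}: whenever an invariant surface $S$ is Gorenstein del Pezzo, Table $1$ tells us instantly whether $\gamma_2\cdot S\ge 0$, so only the "genuinely singular or non-del-Pezzo" invariant surfaces require the Proposition \ref{rho2lemma} computation; (iii) cross-check a representative sample of the hand computations with \texttt{polymake} \cite{polymake}, as the authors indicate. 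With these reductions the remaining genuine computations are few, and the conclusion that precisely $23$ of the $233$ three-folds are $\gamma_2$-nef follows by assembling the surviving cases.
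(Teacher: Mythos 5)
Your overall strategy --- a finite verification over Kasprzyk's list of the $233$ $\mathbb{Q}$-factorial terminal toric Fano $3$-folds, dispatching every invariant surface of Picard number one by Proposition \ref{rho1lemma} and evaluating the closed formula of Proposition \ref{rho2lemma} for the remaining ones --- is exactly what the paper does; its ``proof'' of Theorem \ref{mfo} is precisely this exhaustive check, carried out by hand and partly with \texttt{polymake}, with Proposition \ref{directproduct} used to certify the product-type cases.

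However, two of the shortcuts you propose rest on a conflation of two different quantities and would corrupt the check if actually used. The number $\gamma_2(S)$ of Section \ref{delpezzo} is an \emph{intrinsic} invariant of the abstract toric surface $S$, computed from the invariant prime divisors of $S$ itself; what Theorem \ref{mfo} requires you to control is $\gamma_2(X)\cdot S=\sum_i D_i^2\cdot S$, computed from the divisors of the ambient three-fold. These do not determine one another: $D_i^2\cdot S$ involves the restriction $D_i|_S$, which for the divisor cutting out $S$ is a normal-bundle term that has no counterpart among the invariant divisors of $S$, and which for the other divisors can differ from the corresponding curve class on a singular $X$. Consequently: (i) Corollary \ref{surfacegamma}, which compares the intrinsic $\gamma_2$ under a toric blow-up of surfaces, gives no certificate of failure for an invariant surface $S\subset X$ with $\rho(S)\ge 3$ --- for those you must compute $I_{S/X}$ directly from the wall relations, since the paper's closed formulas cover only $\rho(S)\le 2$; and (ii) Table $1$ does not ``instantly'' decide the sign of $\gamma_2(X)\cdot S$ when an invariant surface happens to be Gorenstein del Pezzo --- note that the third column of Table $2$ records the base of a Fano \emph{contraction}, not an invariant subsurface, and the paper's own remark about the $\mathbb{P}^1$-bundle over $\mathbb{P}^1\times\mathbb{P}^1$ shows that $\gamma_2$-nefness of an associated surface does not control the ambient computation. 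Finally, your assertion that $\rho(S)\in\{1,2,3\}$ for every invariant surface is unsupported: $\rho(S)$ equals the number of maximal cones containing the corresponding ray minus two, which is not a priori bounded by three for these fans and must be read off case by case. None of this changes the fact that the theorem is established by brute force, but the reductions as you state them are not valid, and the count of $23$ has to come from the unabridged computation.
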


The following is the classification table. In the first column of the table, we describe 
ID of Kasprzyk's classification list: 
\[
\mathtt{
http://www.grdb.co.uk/forms/toricf3t
}
\]
We describe a toric variety $X_\Sigma$ by giving 
all the elements in 
$\G(\Sigma)$ in the second column. 
In the 3rd column, if there exists a Fano contraction 
$\varphi:X\to S$ such that $S$ is 
a Gorenstein $\gamma_2$-nef toric del Pezzo surface, 
then we describe ID of $S$ 
in Table $1$ in Section \ref{delpezzo}.   

\newpage

\begin{table}[htb]
\begin{center}
\caption{$\mathbb{Q}$-factorial terminal 
$\gamma_2$-nef toric 
Fano $3$-folds}
\begin{tabular}{|c|l|l|}
\hline
ID  & $\G(\Sigma)$  & $S$ \\
\hline\hline

$1$  & 
$(1,0,0), (0,1,0), (1,-3,5), (-2,2,-5)$ &
\\
\hline
$2$  & 
$(1,0,0), (0,1,0), (1,-2,7), (-1,1,-5)$ &
\\
\hline
$3$  & 
$(1,0,0), (0,1,0), (-2,2,7), (1,-2,-5)$ &
\\
\hline
$4$  & 
$(1,0,0), (0,1,0), (0,0,1), (-1,-1,-1)$ &
\\
\hline
$5$  & 
$(1,0,0), (0,1,0), (1,-2,5), (-1,1,-4)$ &
\\
\hline
$6$  & 
$(1,0,0), (0,1,0), (-2,1,5), (1,-1,-3)$ &
\\
\hline
$7$  & 
$(1,0,0), (0,1,0), (1,1,2), (-1,-1,-1)$ &
\\
\hline
$8$  & 
$(1,0,0), (0,1,0), (1,-2,3), (-1,1,-2)$ &
\\
\hline
$9$  & 
$(1,0,0), (0,1,0), (1,2,3), (-1,-1,0), (-1,-2,-3)$ & $12$
\\
\hline
$24$  & 
$(1,0,0), (0,1,0), (0,0,1), (-1,-1,0), (0,0,-1)$ & $16$
\\
\hline
$34$  & 
$(1,0,0), (0,1,0), (-2,1,5), (1,-1,-3), (-1,1,3)$ & $14$
\\
\hline
$35$  & 
$(1,0,0), (0,1,0), (1,1,2), (-1,-1,-1), (1,1,1)$ & $16$
\\
\hline
$36$  & 
$(1,0,0), (0,1,0), (0,0,1), (-1,-1,-1), (-1,0,0)$ & $16$
\\
\hline
$38$  & 
$(1,0,0), (0,1,0), (1,-2,3), (-1,1,-2), (-1,0,0)$ & $14$
\\
\hline
$43$  & 
$(1,0,0), (0,1,0), (1,-2,3), (-1,1,-2), (1,-1,2)$ & $15$
\\
\hline
$45$  & 
$(1,0,0), (0,1,0), (1,1,2), (-1,-1,-1), (-1,0,0)$ & $15$
\\
\hline
$47$  & 
 $(1,0,0), (0,1,0), (1,1,2), (-1,0,0), (0,-1,0), (-1,-1,-2)$ 
 & $6,6,6$
\\
\hline
$62$  & 
 $(1,0,0), (0,1,0), (0,0,1), (-1,0,0), (0,-1,0), (0,0,-1)$ 
& $11,11,11$
\\
\hline
$105$  & 
 $(1,0,0), (0,1,0), (1,1,1), (-1,-1,0), (0,0,-1), (1,1,0)$
 & $11$
\\
\hline
$110$  & 
 $(1,0,0), (0,1,0), (1,1,2), (-1,-1,-1), (-1,0,0), (0,-1,0)$
 & $8,8$
\\
\hline
$123$  & 
 $(1,0,0), (0,1,0), (0,0,1), (-1,-1,0), (0,0,-1), (-1,0,0)$
 & $10,11$
\\
\hline
$131$  & 
 $(1,0,0), (0,1,0), (0,0,1), (-1,-1,-1), (-1,0,0), (-1,-1,0)$
 & $10$
\\
\hline
$140$  & 
 $(1,0,0), (0,1,0), (0,0,1), (-1,-1,-1), (-1,0,0), (-1,-1,-2)$
 & $9$
\\
\hline

\end{tabular}
\end{center}
\end{table}

\begin{rem}
In the above table, every $\mathbb{Q}$-factorial terminal 
$\gamma_2$-nef toric 
Fano $3$-fold $X$ of $\rho(X)\ge 2$ is not 
$\gamma_2$-positive. 
\end{rem}

So, we suggest the following question:

\begin{que}\label{question1}
Does there exist a $\mathbb{Q}$-factorial 
terminal projective $\gamma_2$-positive toric 
variety $X$ of $\rho(X)\ge 2$?
\end{que}

The following example tells us 
that the answer to Question \ref{question1} is {\em positive}, 
if $X$ is not terminal:

\begin{ex}
Let $\Sigma$ be a complete fan in $N=\mathbb{Z}^2$ 
such that $\G(\Sigma)=\{x_1:=(1,0),x_2:=(1,2),
x_3:=(-1,2),x_4:=(-1,-1)\}$, and $S=S_\Sigma$ 
the associated projective toric surface. 
One can easily find the two relations 
\[
2x_1+x_3-x_2=0\mbox{ and }3x_2+4x_4-x_3=0.
\]
By Proposition \ref{rho2lemma}, for a positive number $\alpha$, 
we have 
\[
\alpha\gamma_2=3\times(1+2^2+1)+
2\times 3\times(-1-3)+(3^2+4^2+1)=20>0.
\]
Therefore, $S$ is $\gamma_2$-positive. We remark that 
$S$ has two canonical singular points and one log terminal 
singular point.
\end{ex} 

On the other hand, for every $\gamma_2$-nef toric 
Fano $3$-fold $X$ of $\rho(X)\ge 2$ in Table $2$, 
there exists a Fano contracton $\varphi:X\to S$ 
such that $\dim S=2$ and $S$ is $\gamma_2$-nef. 
So, the following is also a natural question. 

\begin{que}\label{question2}
For any $\mathbb{Q}$-factorial terminal projective 
$\gamma_2$-nef toric 
$d$-fold of $\rho(X)\ge 2$, does one of the following 
hold? 
\begin{enumerate}
\item There exists a Fano contraction $\varphi:X\to \overline{X}$ 
such that $\overline{X}$ is a 
$\gamma_2$-nef toric $(d-1)$-fold. 
\item There exists a toric finite morphism 
$\pi:X'\to X$ such that $X'$ is a direct product of lower-dimensional  
$\gamma_2$-nef toric varieties (see Proposition \ref{directproduct}).
\end{enumerate}
\end{que}

\begin{rem}
Without the assumption that $X$ is terminal, 
the Gorenstein $\gamma_2$-nef 
toric del Pezzo surface of ID $9$ in Table $1$ in 
Section \ref{delpezzo} 
tells us that the answer to 
Question \ref{question2} 
is negative. 
\end{rem}

\begin{rem}
There exists a $\mathbb{Q}$-factorial terminal toric 
$3$-fold such that it has a Fano contraction to a 
$\gamma_2$-nef surface, 
but is not $\gamma_2$-nef. For example, let $X=X_\Sigma$ be 
the smooth toric Fano $3$-fold such that 
\[
\G(\Sigma)=\{(1,0,0), (0,1,0), (0,0,1), (-1,-1,-1), (-1,-1,0), (1,1,0)\}. 
\]
Then, $X$ is a $\mathbb{P}^1$-bundle over 
$\mathbb{P}^1\times\mathbb{P}^1$, but not $\gamma_2$-nef. 
\end{rem}

\smallskip

We end this section by giving an example of 
calculations for a terminal toric Fano $3$-fold.

\begin{ex}
Let $X=X_\Sigma$ be the 
$\mathbb{Q}$-factorial terminal 
toric Fano $3$-fold of ID $34$ in the above table. 
Put 
$v_1:=(1,0,0),\ v_2:=(0,1,0),\ v_3:=(-2,1,5),
\ v_4:=(1,-1,-3),\ v_5:= (-1,1,3)$, and put 
$D_1,\ldots,D_5$ be the torus invariant 
divisors corresponding to $v_1,\ldots,v_5$, respectively. 
Then, we have a $3$ relations
\[
D_1-2D_3+D_4-D_5=0,\ D_2+D_3-D_4+D_5=0,\ 
5D_3-3D_4+3D_5=0
\]
in $\N^1(X)$. There exist exactly $6$ maximal cones 
generated by 
\[
\{v_1,v_2,v_4\},\ \{v_2,v_3,v_4\},\ \{v_1,v_3,v_4\},\ 
\{v_1,v_2,v_5\},\ \{v_2,v_3,v_5\},\ \{v_1,v_3,v_5\}.
\]
The equalities $3D_4=5D_3+3D_5$, $3D_1=D_3$ 
and $2D_1=D_2$ 
say that it is sufficient to check the non-negativities 
for two torus invariant surfaces $S_1$ and $S_5$ 
corresponding to $1$-dimensional cones 
$\mathbb{R}_{\ge 0}v_1$ and 
$\mathbb{R}_{\ge 0}v_5$, respectively. 
Since $\rho(S_5)=1$, we have $\gamma_2\cdot S_5>0$ by 
Proposition \ref{rho1lemma}.  
On the other hand, since $\rho(S_1)=2$, we can apply 
Proposition \ref{rho2lemma}. 
One can easily calculate the relations 
\[
2v_2+3v_3-5v_5+v_1=0\mbox{ and }
v_4+v_5=0
\]
corresponding to $2$-dimensional cones 
\[
\mathbb{R}_{\ge 0}v_1+\mathbb{R}_{\ge 0}v_5
\mbox{ and }
\mathbb{R}_{\ge 0}v_1+\mathbb{R}_{\ge 0}v_2,
\]
respectively. By Proposition \ref{rho2lemma}, there 
exists a positive rational number $\alpha$ such that 
\[
\alpha I_{S_1/X}=4(2V_2+3V_2-5V_5+V_1)(V_4+V_5)
+10(V_4+V_5)^2,
\]
where $V_1,\ldots,V_{5}$ are the 
independent variables of polynomials corresponding 
to $v_1,\ldots,v_{5}$, respectively. In particular, 
$\alpha \gamma_2\cdot S_1=4\times(-5)+10+10=0$. 
Therefore, $X$ is $\gamma_2$-nef (not $\gamma_2$-positive). 
\end{ex}


\begin{thebibliography}{[18]} 

\bibitem{castravet}
C.~ Araujo and AM.~Castravet, 
Classification of 2-Fano manifolds with high index, 
A celebration of algebraic geometry, Clay Math. Proc., 
vol. 18, Amer. Math. Soc., Providence, RI, 2013, 1--36. 

\bibitem{polymake}
B.~ Assarf, E.~ Gawrilow, K.~Herr, M.~Joswig, 
B.~Lorenz, A.~Paffenholz and T.~Rehn, 
Computing convex hulls and counting integer points with 
\texttt{polymake}, 
Math. Program. Comput. \textbf{9} (2017), no. 1, 1--38.


\bibitem{cls}
D.~A.~Cox, J.~B.~Little, H.~K.~Schenck, 
{\em{Toric varieties}}, 
Graduate Studies in Mathematics, 
\textbf{124}. American Mathematical Society, Providence, RI, 2011.





\bibitem{fulton}
W.~Fulton, {\em{Introduction to toric varieties}}, 
Annals of Mathematics Studies, {\textbf{131}}. 
The William H. Roever Lectures in Geometry. 
Princeton University Press, Princeton, NJ, 
1993.  

\bibitem{starr}
A. J. de Jong and Jason Starr, 
Higher Fano manifolds and rational surfaces, 
Duke Math. J. \textbf{139} (2007), no. 1, 173--183.

\bibitem{fano3fold}
A.~Kasprzyk, 
Toric Fano three-folds with terminal singularities, 
Tohoku Math. J. \textbf{58} (2006), no. 1, 101--121.


\bibitem{koelman} 
R. Koelman, The number of moduli of families of curves on toric surfaces, 
Thesis, Univ. Nijmegen, 1991.

\bibitem{nobili}
E.~Nobili, Classification of Toric 2-Fano 4-folds, 
Bull. Braz. Math. Soc., New Series \textbf{42} 
(2011), 399--414.

\bibitem{oda}
T.~Oda, 
{\em{Convex bodies and algebraic geometry, 
An introduction to the theory of toric varieties}}, 
Translated from the 
Japanese, Results in Mathematics and Related
Areas (3) {\textbf{15}}, Springer-Verlag, Berlin, 1988.


\bibitem{sato1}
H.~Sato, The numerical class of a surface on a toric manifold, 
Int. J. Math. Math. Sci. 2012, 9 pp. 

\bibitem{sato2}
H.~Sato, Toric 2-Fano manifolds and extremal contractions, 
Proc. Japan Acad. Ser. A Math. Sci. \textbf{92} (2016), 
no. 10, 121--124. 

\end{thebibliography}
\end{document}